\theoremstyle{plain}
\newtheorem{thm}{Theorem}[section]
\newtheorem{lemma}[thm]{Lemma}
\newtheorem{prop}[thm]{Proposition}
\newtheorem{cor}[thm]{Corollary}
\theoremstyle{definition}
\newtheorem{dfn}[thm]{Definition}
\newtheorem{ex}[thm]{Example}
\newtheorem{remark}[thm]{Remark}
\numberwithin{equation}{section} \numberwithin{figure}{section}
\numberwithin{table}{section}
\newcommand*\circled[1]{\tikz[baseline=(char.base)]{
            \node[shape=circle,draw,inner sep=2pt,scale=.6] (char) {#1};}}
\newcommand{\matr}[2]{\left( \begin{matrix} #1 \\ #2 \end{matrix} \right)}
\newcommand{\la}{\langle}
\newcommand{\ra}{\rangle}
\newcommand{\bw}{\boldsymbol{w}}
\newcommand{\coll}[2]{{\bf c}^{#1}_{#2}}
\newcommand{\pw}{\bw_0}
\newcommand{\sw}{\bw'}
\newcommand{\F}{\mathfrak{F}}
\newcommand{\bv}{\boldsymbol v}
\newcommand{\ts}{\tilde s}
\newcommand{\PP}{\mathcal{P}}
\newcommand{\GS}{Gr{\" o}bner--Shirshov }
\newenvironment{red}{\relax\color{red}}{\relax}
\newenvironment{blue}{\relax\color{blue}}{\hspace*{.5ex}\relax}
\newcommand{\ber}{\begin{red}}
\newcommand{\er}{\end{red}}
\newcommand{\beb}{\begin{blue}}
\newcommand{\eb}{\end{blue}}
\def\ncnode#1#2{{\kern -1pt\mathop\bigcirc\limits_{#2}
                \kern-11pt{\scriptstyle#1}\kern 4pt}}
 \def\nRnode#1#2{{\kern -0.4pt\mathop\Box\limits_{#2}
   \kern-8.6pt{\scriptstyle#1}\kern 2.3pt}}
\def\sbar#1pt{{\vrule width#1pt height3pt depth-2pt}}
\def\dbar#1pt{{\rlap{\vrule width#1pt height2pt depth-1pt}
                 \vrule width#1pt height4pt depth-3pt}}
\tikzset{tab/.style={matrix of math nodes,column sep=-.4, row
sep=-.4,text height=8pt,text width=8pt,align=center}}
\begin{document}

\title[Fully commutative elements]{Fully commutative elements of \\ the complex reflection groups}

\author[G. Feinberg, S. Kim, K.-H. Lee, S.-j. Oh]{Gabriel Feinberg, Sungsoon Kim$^{\diamond}$, Kyu-Hwan Lee$^{\star}$ and Se-jin Oh$^\dagger$}

\thanks{$^{\star}$This work was partially supported by a grant from the Simons Foundation (\#318706).}

\thanks{$^{\diamond}$This author is grateful to KIAS (Seoul, South Korea) for its hospitality during this work.}

\thanks{$^{\dagger}$This work was partially supported by the National Research Foundation of Korea(NRF) Grant funded by the Korea government(MSIP) (NRF-2016R1C1B2013135)}

\address{Department of Mathematics and Computer Science,
Washington College, Chestertown, MD 21620, U.S.A. }
\email{gfeinberg2@washcoll.edu}

\address{D{\' e}partement de Math{\' e}matiques, LAMFA,
Universit{\' e} de Picardie Jules-Verne,
33 rue St. Leu,
80039 Amiens,
France}
\email{sungsoon.kim@u-picardie.fr}

\address{Department of Mathematics, University of Connecticut, Storrs, CT 06269, U.S.A.}
\email{khlee@math.uconn.edu}

\address{Department of Mathematics Ewha Womans University Seoul 120-750, South Korea}
\email{sejin092@gmail.com}

\date{\today}

\begin{abstract}

We extend the usual notion of fully commutative elements from the Coxeter groups to the complex reflection groups. Then we decompose the sets of fully commutative elements into natural subsets according to their combinatorial properties, and investigate the structure of these decompositions. As a consequence, we enumerate and describe the form of these elements in the complex reflection groups.
\end{abstract}

\maketitle

\section{Introduction}
An element $w$ of a Coxeter group is said to
be {\em fully commutative} if any reduced word for $w$ can be
obtained from any other  by  interchanges of adjacent commuting
generators.
In \cite{Stem96},  Stembridge classified the Coxeter
groups that have finitely many fully commutative elements. His
results completed the work of Fan \cite{Fan1996} and Graham \cite{Graham} who had obtained such a classification
for the simply-laced types and had
shown that the fully commutative elements parameterize natural
bases for the corresponding quotients of Hecke algebras.  In type $A_n$,
the quotients are isomorphic to the Temperley--Lieb algebras (see
\cite{Jones1987}). Fan and Stembridge also enumerated the set of
fully commutative elements.  In particular, they showed the
following.

\begin{prop}[\cite{Fan1996,Stem98}]\label{full comm count}  Let $C_n$ be the $n^\textrm{th}$ Catalan number, i.e. $C_n = \frac{1}{n+1}{2n\choose n}$.
Then the numbers of fully commutative elements in the Coxeter groups
of types $A_n$, $B_n$  and $D_n$ are given as follows:
$$
\begin{cases}
C_{n+1} & \text{ if the type is } A_n,
\\
(n+2) \,  C_{n}-1  & \text{ if the type is } B_n,
\\ \frac{n+3}{2} \, C_{n}-1 & \text{ if the type is } D_n.
\end{cases}
$$
\end{prop}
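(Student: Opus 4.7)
The plan is to handle each of the three types separately, since the combinatorics differs substantially. For type $A_n$, my strategy is to reduce the count to a classical Catalan enumeration. Specifically, I would first show that the fully commutative elements of $S_{n+1}$ coincide with the $321$-avoiding permutations: using the adjacent-transposition generators $s_i=(i,i+1)$, a braid relation $s_is_{i+1}s_i=s_{i+1}s_is_{i+1}$ can be applied to some reduced word for $w$ if and only if $w$ has a $321$-pattern in positions whose values realize a descending triple. Once this pattern-avoidance characterization is in place, the count $C_{n+1}$ follows from the standard bijection between $321$-avoiding permutations in $S_{n+1}$ and Dyck paths of length $2(n+1)$.

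For type $B_n$, I would switch to the heap model of Viennot--Stembridge: each commutation class of reduced words, hence each fully commutative element, corresponds to a labeled convex poset (heap) on the Coxeter graph whose linear extensions are exactly the reduced words in that class. Fully commutativity then translates into a local condition on heaps that forbids certain convex sub-configurations at each braid edge. For $B_n$ the only delicate braid edge is the length-$4$ edge between $s_0$ and $s_1$, which forces any fully commutative heap to contain at most one occurrence of $s_0$ sandwiched appropriately by $s_1$'s. I would decompose a heap according to the presence or absence of $s_0$ and the positions of its neighbors; the heaps without $s_0$ reduce to the type $A_{n-1}$ count, and the heaps containing $s_0$ can be parametrized by a pair $(k,\pi)$ where $k\in\{0,1,\dots,n\}$ records an interface and $\pi$ is a type-$A$-like subheap. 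Summing yields the formula $(n+2)C_n - 1$, the $-1$ correcting an overcount of a degenerate boundary case.

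For type $D_n$ the Coxeter graph has a fork with two simply-laced nodes $s_1$ and $s_2$ both attached to $s_3$. I would extend the $B_n$ decomposition by separating the heap into its ``tail'' on $\{s_3,\dots,s_n\}$ and its ``fork'' on $\{s_1,s_2\}$, the two parts glued across the cut at $s_3$. The $\mathbb{Z}/2$-symmetry exchanging $s_1$ and $s_2$ must be tracked carefully since a heap may or may not be fixed by it; an inclusion--exclusion over this symmetry, combined with the already-established count for $B_{n-1}$ arising from the tail, produces the coefficient $\tfrac{n+3}{2}$ and the corrective $-1$. Throughout, I would verify the formulas against small cases ($n=2,3,4$) by direct enumeration to guard against off-by-one errors.

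The main obstacle, and the part that requires the most care, is the enumeration for $B_n$ and $D_n$: one must avoid double-counting heaps that have several ``extremal'' generators and must correctly treat the identity element and other low-rank boundary configurations. The type $D$ case is hardest because the $\mathbb{Z}/2$ symmetry interacts nontrivially with the fully commutative condition at the fork, so the decomposition has to be set up so that the symmetry acts freely on the pieces being counted, allowing a clean division by two.
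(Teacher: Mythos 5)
You should first be aware that the paper does not prove this proposition: it is quoted from Fan and Stembridge as known background. The closest the paper comes is re-deriving the type $A_{n-1}$ and $B_n$ counts with its own, quite different machinery --- canonical reduced words, collections, packets, and the Catalan triangle recursion $C(n,k)=C(n,k-1)+C(n-1,k)$ (Lemma \ref{diag}, Theorem \ref{main D}, Corollary \ref{cor-end} with $d=2$) --- and it explicitly does \emph{not} recover the $D_n$ number, since its embedded notion of full commutativity for $G(2,2,n)$ yields $\tfrac{n+1}{2}C_n$ rather than $\tfrac{n+3}{2}C_n-1$. So your proposal has to stand on its own as a proof of the classical statement, and only the type $A$ part does: full commutativity in $S_{n+1}$ is indeed equivalent to $321$-avoidance, and the Catalan count follows.

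The type $B_n$ argument rests on a false structural claim. A fully commutative element of $B_n$ may contain the generator at the quadruple bond many times --- up to $n$ times. In the paper's model $G(2,1,n)\cong B_n$, every suffix $s_{nj_1}s_{nj_2}\cdots s_{nj_\ell}$ with $j_1<\cdots<j_\ell$ is fully commutative (Proposition \ref{every suffix}); for instance $[3,2,1,3,2,3]$ in $B_3$ is fully commutative and contains the special generator three times. The actual constraint imposed by the $4$-bond is that no word in the commutation class contains an alternating factor of length four in the two generators of that bond, which allows arbitrarily many interleaved occurrences as long as other letters block the commutations; a decomposition built on ``at most one occurrence of $s_0$'' therefore misses most of the elements and cannot produce $(n+2)C_n-1$. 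Moreover the parametrization ``by a pair $(k,\pi)$'' and the final summation are asserted rather than carried out, and the correct count is not of the shape $(n+1)C_n + C_n - 1$ for any naive interface parameter $k$ (compare the paper's $\sum_k C(n,k)\,|\PP(n,k)|$ with $|\PP(n,k)|=2^{n-k-1}$ in the middle range). The $D_n$ part inherits this gap and adds another: the diagram automorphism exchanging the fork generators does not act freely on fully commutative elements (every element supported away from the fork is fixed), so there is no ``clean division by two''; the target $\tfrac{n+3}{2}C_n-1$ cannot be reached by halving a $B_{n-1}$-type count in the way you describe. As written, the proposal proves only the type $A_n$ case.
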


This paper is concerned with the complex reflection groups $G(d,r,n)$, where $d,r, n \in \mathbb Z_{>0}$ such that $r|d$. These groups are  generated by complex reflections and have the Coxeter groups of types $A_{n-1}$, $B_n$ and $D_n$ as special cases. As the complex reflection groups can be presented by analogues of simple reflections and braid relations, one can attempt to generalize the notion of full commutativity to these groups. However, a direct generalization using the usual set of braid relations does not work even for $G(d,1,n)$ if $d\ge 3$. A breakdown comes from the fact that some reduced words may not be connected to others strictly using braid relations.

In this paper, we overcome the difficulty and define fully commutative elements for $G(d,1,n)$, by proving that a slightly extended set of braid relations connects all the reduced words for an element of $G(d,1,n)$. (See Example \ref{ex-over}.) The next task is  to describe and enumerate all the fully  commutative elements, and we take the approach of the paper \cite{FL14} where the first and third named authors studied fully commutative elements of the Coxeter group of type $D_n$.

More precisely, we decompose the set of fully commutative elements into natural subsets, called {\em collections}, according to  their canonical words, and group them together into {\em packets} $\PP(n,k)$, $0 \le k \le n$,  so that all the collections in a packet have the same cardinality. We show that the number of fully commutative elements in any collection belonging to the packet $\PP(n,k)$ is equal to the Catalan triangle number $C(n,k)$.
Then the total number of fully commutative elements in $G(d,1,n)$ can be written as
\begin{equation}  \label{eq-11}
\sum_{k=0}^n C(n,k) \left | \PP(n, k) \right | = d(d-1)\F_{n,n-2}(d) +(2d-1)C_n -(d-1),
\end{equation}
where  $|\PP(n,k)|$ is the number of collections in the $(n,k)$-packet and $\F_{n,k}(x)$ is the Catalan triangle polynomial defined by
\begin{align} \label{eq: n,k Cat tr pol}
\F_{n,k}(x) & = \sum_{s=0}^{k} C(n,s)x^{k-s}.
\end{align}
When $d=2$, the group $G(2,1,n)$ is isomorphic to the Coxeter group of type $B_n$, and our definition of fully commutative elements coincides with the usual definition for Coxeter groups, and we recover the known number $(n+2)\, C_n-1$ from \eqref{eq-11}.

Our method for proofs exploits combinatorics of canonical words and establishes bijections among collections. In particular, we realize the Catalan triangle (Table \ref{CT}) using collections of fully commutative elements.

For the group $G(d,r,n)$, $r >1$, we fix an embedding  into $G(d,1,n)$ and define $w \in G(d,r,n)$ to be fully commutative if its image under the embedding is fully commutative in $G(d,1,n)$. The main benefit of this definition is that the decomposition into collections and packets still works without any complications, and we obtain complete description and enumeration of fully commutative elements for all $G(d,r,n)$. On the other hand, a drawback of this definition is that some fully commutative elements in the Coxeter group of type $D_n$ or $G(2,2,n)$ are not fully commutative in $G(2,1,n)$ after being embedded. That is, the usual definition of full commutativity for $D_n$ is not compatible with the new definition.

Though it is not clear at the present, an intrinsic definition of full commutativity for $G(d,r,n)$, $r>1$, which does not use an embedding, may be found. For such a definition, precise information about a complete set (or \GS basis) of relations would be very helpful. We leave it as a future direction.

The organization of this paper is as follows. In Section \ref{first} we determine canonical words for the elements of the complex reflection groups. In the next section, we define fully commutative elements.
Section \ref{ch:D} is devoted to a study of decomposition of the set of fully commutative elements into collections and packets for  $G(d,1,n)$. In Section \ref{sec-gddn}, we consider the packets of $G(d,r,n)$. The next section provides some examples, and the final section is an appendix with the list of reduced words for $G(3,3,3)$.


\section{Canonical forms} \label{first}


\subsection{Complex reflection groups}


For positive integers  $d$ and $n$, let $G(d,1,n)$ be
the group generated by the elements $s_1,s_2
\cdots, s_{n}$ with defining relations:
\begin{subequations}
\begin{align}
s_n^d&=s_i^2=1 \quad &\mbox{for}\ 1\le i \le n-1, \label{eq:order1} \allowdisplaybreaks \\
s_is_j&=s_js_i \quad &\mbox{for}\ j+1 < i \le n, \label{eq:cm1}\allowdisplaybreaks \\
s_{i+1}s_is_{i+1}&=s_is_{i+1}s_i \quad &\mbox{for}\ 1\le i \le n-2, \label{eq:b1} \allowdisplaybreaks \\
s_ns_{n-1}s_ns_{n-1}&=s_{n-1}s_ns_{n-1}s_n. \label{eq:l1}
\end{align}
\end{subequations}
Then the group $G(d,1,n)$
is a complex reflection group which is isomorphic to the wreath product of the cyclic group $\mathbb Z\slash  d \mathbb Z$ and the symmetric group $S_n$.
The corresponding  diagram is given by
$$
\xymatrix@R=0.5ex@C=4ex{ *{\circled{\phantom{$d$}}}<3pt> \ar@{-}[r]_<{ \ 1}  &*{\circled{\phantom{$d$}}}<3pt>
\ar@{-}[r]_<{ \ 2}  &   {} \ar@{.}[r]_>{ \ \ \qquad n-2} & *{\circled{\phantom{$d$}}}<3pt>
\ar@{-}[r]_>{ \ \ \qquad n-1} &*{\circled{\phantom{$d$}}}<3pt>\ar@{=}[r]_>{
\qquad n} &*{\circled{$d$}}<3pt> } $$

For each $d \ge 2$ and $n \ge 3$, let $G(d,d,n)$ be
the complex reflection group generated by the elements $\ts_1,\ts_2
\cdots, \ts_{n}$ with defining relations:
\begin{subequations}
\begin{align}
\ts_i^2&= 1=(\ts_n\ts_{n-1})^d \quad &\mbox{for}\ 1\le i \le n, \allowdisplaybreaks \\
\ts_i\ts_j&=\ts_j\ts_i \quad &\mbox{for}\ j+1 < i \le n-1, \label{eq:cm2}  \allowdisplaybreaks\\
\ts_n\ts_j & =\ts_j\ts_n  & \text{for } j \le n-3, \allowdisplaybreaks \\
\ts_{i+1}\ts_i\ts_{i+1}&=\ts_i\ts_{i+1}\ts_i \quad &\mbox{for}\ 1\le i \le n-2, \label{eq:b2} \allowdisplaybreaks \\
\ts_n \ts_{n-2} \ts_n & = \ts_{n-2} \ts_n \ts_{n-2}, \\ (\ts_n\ts_{n-1}\ts_{n-2})^2 &=(\ts_{n-2}\ts_n\ts_{n-1})^2. \label{eq:l2}
\end{align}
\end{subequations}
The corresponding  diagram is the following.

\[
 \begin{tikzpicture}[x=2cm, scale=.5]
    \foreach \x in {-1.5, 0,1.5,3}
    \draw[xshift=\x] (\x,0) circle (3 mm);
    \draw (4.5,1.5) circle (3 mm);
    \draw (4.5,-1.5) circle (3 mm);
    \draw (-1.35,0) --(-0.15,0);
    \draw (1.67,0)--(2.87,0);
    \draw[dotted, thick] (0.2,0) -- (1.3,0);
    \foreach \y in {1.5}
    \draw (3.3,0.1)--(4.4,0.1)
          (3.3,-0.1)--(4.4,-0.1);
    \draw (3.2,0.1)--(4.35,1.5)
          (3.2,-0.1)--(4.35,-1.5);
    \draw (4.5,1.24) -- (4.5,-1.25);

        \draw (4.5,0) node[right]{\scriptsize $d$};
        \draw (-1.5,-.3) node[below]{\scriptsize $1$};
    \draw (0,-.3) node[below]{\scriptsize $2$};
    \draw (1.5,-.3) node[below]{\scriptsize $n-3$};
    \draw (3,-.3) node[below]{\scriptsize $n-2$};
    \draw (4.5,1.8) node[above]{\scriptsize $n$};
    \draw (4.5,-1.8) node[below]{\scriptsize $n-1$};
  \end{tikzpicture}
\]

Note that the complex reflection groups
$G(2,1,n)$ and $G(2,2,n)$ are the Coxeter groups of types $B_n$ and $D_n$
respectively.

For $r\ | \ d$ and $e=d/r$, let $G( d,r ,n)$ be
the complex reflection group generated by the elements $\ts_1,\ts_2
\cdots, \ts_{n}, \ts$ with defining relations:
\begin{subequations}
\begin{align}
\ts^e=\ts_i^2&= 1 \quad &\mbox{for}\ 1\le i \le n, \allowdisplaybreaks\\
\ts_i\ts_j&=\ts_j\ts_i \quad &\mbox{for}\ j+1 < i \le n-1, \label{eq:cm3} \allowdisplaybreaks\\
\ts\ts_j & =\ts_j\ts  & \text{for } j \le n-2, \allowdisplaybreaks\\
\ts_n\ts_j & =\ts_j\ts_n  & \text{for } j \le n-3, \allowdisplaybreaks\\
\ts_{i+1}\ts_i\ts_{i+1}&=\ts_i\ts_{i+1}\ts_i \quad &\mbox{for}\ 1\le i \le n-2, \label{eq:b3} \allowdisplaybreaks\\
\ts \ts_{n} \ts_{n-1} &= \ts_{n} \ts_{n-1} \ts \allowdisplaybreaks\\
\ts_n \ts_{n-2} \ts_n & = \ts_{n-2} \ts_n \ts_{n-2}, \allowdisplaybreaks\\ 
(\ts_n\ts_{n-1}\ts_{n-2})^2 &=(\ts_{n-2}\ts_n\ts_{n-1})^2 \\ \ts \ts_{n} (\ts_{n-1}\ts_{n})^{r-1} &= \ts_{n-1} \ts. \label{eq:l3}
\end{align}
\end{subequations}


\subsection{Canonical reduced words for $G(d,1,n)$}
An  expression $s_{i_1} \cdots
s_{i_r} \in G(d,1,n)$ will be identified with the word $[i_1,  \dots, i_r]$ in  the alphabet $I := \{1, 2, \dots, n \}$.
For $1\leq i ,j \leq n$, we define the words $s_{ij}$ by:
$$ s_{ij} =
 \left\{
   \begin{array}{ll}
     ~[i, i-1, \hdots, j] & \text{ if } i > j, \\
     ~[i] & \text{ if } i = j, \\
     ~[~] & \textrm{ if } i<j ,
   \end{array}
 \right.
$$
where $[~]$ denotes the empty word or the identity element of $G(d,1,n)$.
We will often write $s_{ii}=s_i$. We also define \[ s^{(k)}_{nj} =s_n^k s_{n-1,j} \qquad \text{ for } k \ge 1.\]

The following lemmas are useful to obtain  a
canonical form of the elements of $G(d,1,n)$.

\begin{lemma} \label{lem-GS-Grd1}
The following relations hold in $G(d,1,n)$:
\begin{subequations}
\begin{align}
s_{ij}s_{i}& =s_{i-1}s_{ij} & & \text{ for } j < i \le n-1 , \label{eq:s}  \allowdisplaybreaks \\
s^{(k_1)}_{n,n-1} \, s^{(k_2)}_{n,n-1}& =s_{n-1}s^{(k_2)}_{nj}s^{k_1}_{n} & & \text{ for }  k_1, k_2\ge 1. \label{eq:nt}
\end{align}
\end{subequations}
\end{lemma}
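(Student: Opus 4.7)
The plan is to deduce both identities directly from the defining relations \eqref{eq:cm1}, the braid relation \eqref{eq:b1}, and the length--four braid relation \eqref{eq:l1}, without invoking any further combinatorics.

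For \eqref{eq:s}, I would simply expand $s_{ij} = s_i s_{i-1} s_{i-2} \cdots s_{j+1} s_j$ according to the definition and then transport the trailing $s_i$ of $s_{ij}s_i$ leftward. Each of the generators $s_j, s_{j+1}, \ldots, s_{i-2}$ differs from $s_i$ by at least two in its index, so \eqref{eq:cm1} lets $s_i$ commute past all of them, producing $s_i s_{i-1} s_i \cdot s_{i-2} s_{i-3} \cdots s_j$. Applying the braid relation \eqref{eq:b1} to the initial triple $s_i s_{i-1} s_i$ rewrites it as $s_{i-1} s_i s_{i-1}$, and the result $s_{i-1}\cdot s_i s_{i-1} s_{i-2} \cdots s_j = s_{i-1} s_{ij}$ is exactly what is required.

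For \eqref{eq:nt}, the key observation is that \eqref{eq:l1} is equivalent to saying that $s_n$ commutes with the element $t := s_{n-1}s_n s_{n-1}$; indeed, \eqref{eq:l1} rewrites immediately as $s_n(s_{n-1}s_n s_{n-1}) = (s_{n-1}s_n s_{n-1})s_n$. A short induction on $k \ge 1$, using only $s_{n-1}^2 = 1$, then yields
\[ s_{n-1} s_n^{k} s_{n-1} = t^{k}, \qquad \text{since } t^{k+1} = s_{n-1}s_n s_{n-1}\cdot s_{n-1}s_n^{k}s_{n-1} = s_{n-1}s_n^{k+1}s_{n-1}. \]
Unwinding the notation $s^{(k)}_{n,n-1} = s_n^{k} s_{n-1}$, the left-hand side of \eqref{eq:nt} equals $s_n^{k_1} s_{n-1} s_n^{k_2} s_{n-1} = s_n^{k_1}\cdot t^{k_2}$. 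Since $s_n$ commutes with $t$, so do $s_n^{k_1}$ and $t^{k_2}$, giving $s_n^{k_1} t^{k_2} = t^{k_2} s_n^{k_1} = s_{n-1}s_n^{k_2}s_{n-1}\cdot s_n^{k_1}$, which is the right-hand side (with $j$ read as $n-1$).

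I do not anticipate any serious obstacle; both identities are elementary rearrangements. The only point that deserves care is the recognition that \eqref{eq:l1} is really a centralizer statement for $t = s_{n-1}s_n s_{n-1}$, together with the straightforward induction $s_{n-1}s_n^{k}s_{n-1} = t^{k}$; once these are in place, \eqref{eq:nt} drops out in one line.
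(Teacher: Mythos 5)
Your proof is correct. For \eqref{eq:s} you spell out the commutation-plus-braid computation that the paper dismisses with ``can be checked easily,'' which is fine and complete. For \eqref{eq:nt} your route genuinely differs from the paper's: the paper first proves the special case $k_1=1$, namely $s_{n,n-1}\,s^{(k)}_{n,n-1}=s_{n-1}s^{(k)}_{n,n-1}s_n$, by induction on $k$ (inserting $s_{n-1}^2=1$ and applying \eqref{eq:l1} at each step), and then runs a second induction on $k_1$ to peel off the powers of $s_n$ one at a time. You instead recast \eqref{eq:l1} as the statement that $s_n$ centralizes $t=s_{n-1}s_ns_{n-1}$, prove $s_{n-1}s_n^k s_{n-1}=t^k$ by a one-line induction, and then obtain \eqref{eq:nt} immediately from $s_n^{k_1}t^{k_2}=t^{k_2}s_n^{k_1}$. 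The ingredients are identical---only \eqref{eq:l1} and $s_{n-1}^2=1$, plus \eqref{eq:cm1} and \eqref{eq:b1} for \eqref{eq:s}---but your packaging replaces the paper's nested induction with a single structural observation, which is shorter and arguably more transparent. You also correctly read the stray $j$ on the right-hand side of \eqref{eq:nt} as $n-1$, consistent with the left-hand side and with the identity the paper actually proves.
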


\begin{proof}
Since the relations \eqref{eq:s}  can be checked easily, we only prove the relations \eqref{eq:nt}. We first establish
\begin{equation} \label{eq:tss}
s_{n,n-1} \, s^{(k)}_{n,n-1} =s_{n-1}s^{(k)}_{n,n-1}s_{n}  \qquad \text{ for }  k\ge 1.
\end{equation}
When $k=1$, it is just the defining relation \eqref{eq:l1}. For $k\ge 2$, we use induction on $k$ and obtain
\begin{align*} 
s_{n,n-1} \, s^{(k)}_{n,n-1} &= s_{n,n-1} \, s^{(k-1)}_{n,n-1} s_{n-1}s_ns_{n-1} &(\text{using $s_{n-1}^2=1$)} \allowdisplaybreaks\\ 
&=s_{n-1} s^{(k-1)}_{n,n-1} s_n s_{n-1}s_ns_{n-1}  &(\text{induction)} \allowdisplaybreaks \\ 
&= s_{n-1} s^{(k-1)}_{n,n-1} s_{n-1}s_ns_{n-1}s_n  &(\text{using \eqref{eq:l1})} \allowdisplaybreaks\\ 
&= s_{n-1}s^{(k)}_{n,n-1}s_{n}. 
\end{align*}

Next we prove \[ s^{(k_1)}_{n,n-1} \, s^{(k_2)}_{n,n-1} = s_{n-1} s^{(k_2)}_{n,n-1} s^{k_1}_n \quad \text{ for } k_1,k_2 \ge 1. \] The case $k_1=1$ is obtained above, and when $k_1 \ge 2$, we see
\begin{align*}
s^{(k_1)}_{n,n-1} \, s^{(k_2)}_{n,n-1} &= s_ns^{(k_1-1)}_{n,n-1} \, s^{(k_2)}_{n,n-1} = s_{n,n-1} s^{(k_2)}_{n,n-1} \, s^{k_1-1}_{n}  &(\text{induction)} \allowdisplaybreaks\\ 
&=s_{n-1} s^{(k_2)}_{n,n-1} s^{k_1}_n  &(\text{relation \eqref{eq:tss}}).  
\end{align*}
\end{proof}

\begin{lemma} \label{lem-only}
The following relations hold in $G(d,1,n)$:
\begin{align}
s^{(k_1)}_{nj} \, s^{(k_2)}_{nj}& =s_{n-1}s^{(k_2)}_{nj}s^{(k_1)}_{n, j+1} & \text{ for } j\le n-1 \text{ and } k_1, k_2\ge 1. \label{eq:t}
\end{align}
Moreover, these relations are derived from \eqref{eq:cm1}, \eqref{eq:b1} and \eqref{eq:nt}.
\end{lemma}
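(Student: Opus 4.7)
The plan is to proceed by downward induction on $j$, with the base case $j = n-1$ being exactly the previously established relation \eqref{eq:nt} (noting that $s^{(k_1)}_{n,n} = s_n^{k_1}$ since $s_{n-1,n}$ is the empty word). For the inductive step ($j \le n-2$), I would exploit the recursive factorization $s^{(k)}_{nj} = s^{(k)}_{n,j+1}\, s_j$, which is immediate from the definition.

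Starting from
\[
s^{(k_1)}_{nj}\, s^{(k_2)}_{nj} = s^{(k_1)}_{n,j+1}\, s_j \, s^{(k_2)}_{n,j+1}\, s_j,
\]
I would commute the inner $s_j$ past the prefix $s_n^{k_2} s_{n-1}\cdots s_{j+2}$ of $s^{(k_2)}_{n,j+1}$ using \eqref{eq:cm1}, which is legal since each such generator has index $\ge j+2$. What remains in the middle is $s_j s_{j+1} s_j$, and the braid relation \eqref{eq:b1} turns it into $s_{j+1} s_j s_{j+1}$. Reabsorbing $s_{j+1}$ into the preceding block reconstitutes $s^{(k_2)}_{n,j+1}$, leaving $s^{(k_1)}_{n,j+1}\, s^{(k_2)}_{n,j+1}\, s_j s_{j+1}$.

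The inductive hypothesis now applies to the first two factors to yield $s_{n-1}\, s^{(k_2)}_{n,j+1}\, s^{(k_1)}_{n,j+2}\, s_j s_{j+1}$. Commuting $s_j$ past $s^{(k_1)}_{n,j+2}$ (again via \eqref{eq:cm1}, since the latter involves only generators with index $\ge j+2$) produces $s_{n-1}\, s^{(k_2)}_{n,j+1}\, s_j\, s^{(k_1)}_{n,j+2}\, s_{j+1}$. Finally I would recognize $s^{(k_2)}_{n,j+1}\, s_j = s^{(k_2)}_{nj}$ and $s^{(k_1)}_{n,j+2}\, s_{j+1} = s^{(k_1)}_{n,j+1}$ directly from the definition of $s^{(k)}_{nj}$, giving $s_{n-1}\, s^{(k_2)}_{nj}\, s^{(k_1)}_{n,j+1}$ as desired.

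The only relations invoked along the way are \eqref{eq:cm1}, \eqref{eq:b1}, and, through the base case, \eqref{eq:nt}, which automatically delivers the ``moreover'' clause of the lemma. I do not expect any serious obstacle; the one bookkeeping subtlety is to split off the final factor $s_{j+1}$ of $s^{(k_2)}_{n,j+1}$ before applying the braid relation, so that each movement of $s_j$ really only hops over generators of index at least $j+2$ and never needs the braid or the quartic relation again in the inductive step.
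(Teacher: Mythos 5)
Your proposal is correct and follows essentially the same route as the paper: downward induction on $j$ with base case \eqref{eq:nt}, commuting the inner $s_j$ past the generators of index $\ge j+2$, applying the braid relation to $s_j s_{j+1} s_j$, invoking the inductive hypothesis at level $j+1$, and then commuting $s_j$ back into place. The bookkeeping detail you flag (splitting off the trailing $s_{j+1}$ before applying the braid relation) is exactly how the paper's computation proceeds.
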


\begin{proof}
When $j=n-1$, it is \eqref{eq:nt}. We use downward induction on $j$, and obtain
\begin{align*}
s^{(k_1)}_{nj}\, s^{(k_2)}_{nj} &= s^{(k_1)}_{n, j+1} s^{(k_2)}_{n, j+2} s_j s_{j+1} s_j = s^{(k_1)}_{n, j+1} s^{(k_2)}_{n, j+2} s_{j+1} s_{j} s_{j+1}  & (\text{relations  \eqref{eq:cm1}, \eqref{eq:b1}}) \allowdisplaybreaks \\ 
&= s^{(k_1)}_{n, j+1} s^{(k_2)}_{n, j+1} s_{j} s_{j+1} = s_{n-1} s^{(k_2)}_{n, j+1} s^{(k_1)}_{n,j+2}  s_{j} s_{j+1} & (\text{induction}) \allowdisplaybreaks \\ 
&=s_{n-1} s^{(k_2)}_{n j} s^{(k_1)}_{n,j+1} & (\text{relation  \eqref{eq:cm1}}).
\end{align*}
\end{proof}

Let $\mathcal R$ be the following set of relations:
\begin{subequations}
\begin{align}
s_n^d&=s_i^2=1 & \text{ for }\ 1\le i \le n-1, \label{eq:order-1} \allowdisplaybreaks\\
s_is_j&=s_js_i &\text{ for }\ j+1 < i \le n, \label{eq:cm-1} \allowdisplaybreaks\\
s_{ij}s_{i}& =s_{i-1}s_{ij} & \text{ for } j < i \le n-1 , \label{eq:s-1} \allowdisplaybreaks\\
s^{(k_1)}_{nj} \, s^{(k_2)}_{nj}& =s_{n-1}s^{(k_2)}_{nj}s^{(k_1)}_{n, j+1} & \text{ for } j\le n-1 \text{ and } k_1, k_2\ge 1. \label{eq:t-1}
\end{align}
\end{subequations}

\begin{prop} \label{ccan}
Using only the relations in $\mathcal R$, any element of the group $G(d,1,n)$ can be uniquely
written in the following reduced form
 \begin{equation} \label{eq:ccan}
   s_{1i_1}s_{2i_2}\cdots s_{n-1i_{n-1}}s^{(k_1)}_{nj_1}s^{(k_2)}_{nj_2}\cdots s^{(k_\ell)}_{n j_\ell}
 \end{equation}
 where $1 \le i_p \leq p+1$ for $1 \le p \le n-1$,  and $1\leq j_1<j_2<\cdots<j_\ell\leq n$ for $\ell\geq0$, and $1\le k_p \le d-1$ for $1 \le p \le \ell$.
\end{prop}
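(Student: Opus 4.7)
My plan is to establish existence of the canonical form by a rewriting procedure with induction on $n$, and then deduce uniqueness (and reducedness) from the fact that the number of candidate canonical words already equals $|G(d,1,n)| = n!\,d^n$.

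For existence, the base $n=1$ is immediate via $s_1^d = 1$. For the inductive step, given any word $w$ in $s_1,\dots,s_n$, I would rewrite $w$ in three stages using only the relations in $\R$. First, apply the commutation \eqref{eq:cm-1} to slide each $s_n$ rightward past every $s_i$ with $i \le n-2$, and bundle each $s_n^k$ together with the immediately following descending block $s_{n-1}s_{n-2}\cdots s_j$ into a single piece $s^{(k)}_{nj}$; this brings $w$ to a shape $u_0 \cdot s^{(k_1)}_{n,j_1} \cdot u_1 \cdot s^{(k_2)}_{n,j_2} \cdots s^{(k_m)}_{n,j_m} \cdot u_m$, in which each $u_p$ is a word in $s_1,\dots,s_{n-1}$. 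Second, use \eqref{eq:cm-1} and \eqref{eq:s-1} to commute each $u_p$ with $p\ge 1$ past the preceding tail factors into $u_0$, and then iteratively apply \eqref{eq:t-1} to merge any two tail pieces at the same or out-of-order indices, with $s_n^d=1$ used to collapse any exponent that reaches $d$. Each merge under \eqref{eq:t-1} emits an $s_{n-1}$ on the left, which is absorbed into $u_0$ by the same moves. After finitely many such steps, $w$ has the shape $u \cdot s^{(k_1)}_{n,j_1}\cdots s^{(k_\ell)}_{n,j_\ell}$ with $j_1 < \cdots < j_\ell$ and $1 \le k_p \le d-1$. Third, apply the classical staircase normal form for $S_n$, which uses only \eqref{eq:cm-1} and \eqref{eq:s-1} (the case $j=i-1$ of \eqref{eq:s-1} is precisely the braid relation), to rewrite $u$ as $s_{1,i_1}\cdots s_{n-1,i_{n-1}}$ with $1\le i_p\le p+1$.

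For uniqueness, I would count the candidate canonical words. The staircase part contributes $\prod_{p=1}^{n-1}(p+1) = n!$ choices; the tail part, summed over all subsets $\{j_1<\cdots<j_\ell\}\subseteq\{1,\dots,n\}$ with $(d-1)$ options per $k_p$, contributes $\sum_{\ell=0}^{n}\binom{n}{\ell}(d-1)^\ell = d^n$ choices. The product $n!\,d^n$ already equals $|G(d,1,n)|$, so the surjection from canonical words to group elements delivered by existence is forced to be a bijection, yielding uniqueness. Reducedness then follows because every relation in $\R$ apart from \eqref{eq:order-1} preserves word length: any expression for $w$ rewrites to its canonical form without any length increase, so the canonical expression attains the minimum length.

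The hard part will be the second stage of existence, namely choreographing the merges under \eqref{eq:t-1} so that the $s_{n-1}$ it emits on the left does not disturb the tail structure being built to its right, while ensuring that the process halts. I would control this by a well-founded measure, for instance the lexicographic pair (total number of $s_n$'s not yet in canonical tail position, number of inversions in the current sequence $(j_1,\dots,j_m)$), which strictly decreases under each rewriting move. The relations \eqref{eq:s-1} and \eqref{eq:t-1} established in Lemmas \ref{lem-GS-Grd1} and \ref{lem-only} appear designed precisely so that such a measure works, which is why they replace the raw braid relations \eqref{eq:b1} and \eqref{eq:l1} in $\R$.
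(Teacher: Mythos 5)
Your proposal is correct and follows essentially the same route as the paper: existence by rewriting with the relations in $\mathcal R$ (the paper organizes this as an induction on the number of $s_n$-blocks, peeling the last one off and merging it into the tail via \eqref{eq:cm-1} and \eqref{eq:t-1}, rather than your three-stage sweep, but the moves are the same), and uniqueness by the identical count $n!\,d^n=|G(d,1,n)|$. Your closing remark that the length-preserving nature of \eqref{eq:cm-1}, \eqref{eq:s-1}, \eqref{eq:t-1} yields reducedness is a small addition the paper leaves implicit, and your flagged concern about termination of the merges is no worse handled than in the paper, which likewise asserts termination "in a finite number of steps."
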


\begin{proof}
Consider $w \in G(d,1,n)$ and an expression of $w$ written in generators. Let $\ell$ be the number of occurrences of $s_n^k$ in the expression of $w$ for various $k$'s, where $k$ is maximal for each occurrence, i.e., if $w= \cdots s_i s_n^k s_j \cdots$, then $i \neq n$ and $j \neq n$. If $\ell =0$ then $w$ is an element of the subgroup of type $A_{n-1}$ and it is well known that one can use only the relations (without $s_n$) in $\mathcal R$  to obtain the reduced form \eqref{eq:ccan}. (See, for example, \cite{Bokut2001}.)

Assume that $\ell >0$.  Then we can write \[ w=w_1 s_n^k s_{1p_1}s_{2p_2}\cdots s_{n-1p_{n-1}}, \] where  $s_n^k$ is the last occurrence of a power of $s_n$ in the expression of $w$. By the commutativity relation \eqref{eq:cm-1}, we have
\[ 
w= w_1 s_{1p_1}s_{2p_2}\cdots s_{n-2p_{n-2}} s_n^k s_{n-1p_{n-1}}= w_2 \, s_{np_{n-1}}^{(k)},  
\]
where we set $w_2=w_1 s_{1p_1}s_{2p_2}\cdots s_{n-2p_{n-2}}$. By induction, the element $w_2$ can be written  in the form \eqref{eq:ccan}, and we have
\[ 
w=s_{1i_1}s_{2i_2}\cdots s_{n-1i_{n-1}}s^{(k_1)}_{nj_1}s^{(k_2)}_{nj_2}\cdots s^{(k_{\ell-1})}_{n j_{\ell-1}} s_{np_{n-1}}^{(k)} .
\] 
If $j_{\ell-1} < p_{n-1}$ or $j_{\ell-1}=n$ then we are done. If $j_{\ell-1} \ge p_{n-1}$ and $j_{\ell-1} < n$ then we use  the relations \eqref{eq:cm-1} and \eqref{eq:t-1} to obtain
\begin{align} 
s^{(k_{\ell-1})}_{n j_{\ell-1}} s_{np_{n-1}}^{(k)} & = s^{(k_{\ell-1})}_{n j_{\ell-1}} s_{nj_{\ell-1}}^{(k)} s_{j_{\ell-1}-1, p_{n-1}} \nonumber \\ 
&= s_{n-1} s_{nj_{\ell-1}}^{(k)} s^{(k_{\ell-1})}_{n, j_{\ell-1}+1} s_{j_{\ell-1}-1, p_{n-1}} = s_{n-1} s_{n p_{n-1}}^{(k)} s_{n, j_{\ell-1}+1}^{(k_{\ell-1})} . \label{eq:lo} 
\end{align} 
Using \eqref{eq:lo}, we  rewrite $w$ and apply the induction hypothesis again. We repeat this process until we get the canonical form \eqref{eq:ccan} in a finite number of steps.

Now we claim that the number of the canonical words is exactly $n!\cdot d^n$. Indeed, since each $i_p$ runs over the set $\{ 1,\cdots, p+1\}$, there are $2\cdot 3
\cdots n = n!$ choices for the part $s_{1i_1}s_{2i_2}\cdots s_{n-1i_{n-1}}$. Now for the part $s^{(k_1)}_{nj_1}s^{(k_2)}_{nj_2}\cdots s^{(k_\ell)}_{n j_\ell}$ with the conditions $1\leq j_1<j_2<\cdots<j_\ell\leq n$ ($\ell\geq0$) and $1 \le k_p \le d-1$,
we just need to consider the number of forms $s^{(k_1)}_{n1}s^{(k_2)}_{n2}\cdots s^{(k_\ell)}_{nn}$ with each $k_p$ running over the set $\{0,1,\cdots, d-1\}$,  setting $s^{(0)}_{nj_p}=1$ for convenience. In this way,
this part has $d^n$ elements. Thus altogether, we have $n!\cdot d^{n}$ canonical words as claimed.

We recall that $n! \cdot d^n$ is the order of $G(d,1,n)$. Thus we have shown that every element of $G(d,1,n)$ is uniquely written in the canonical form \eqref{eq:ccan}.
\end{proof}

\begin{remark} Proposition \ref{ccan} shows that $\mathcal R$ is a \GS basis for the group $G(d,1,n)$. See \cite{Bokut2001} for details about  \GS bases.
For the group $G(2,1,n)$, which is the Coxeter group of type $B_n$, the canonical form \eqref{eq:ccan} is obtained by Bokut and Shiao \cite[Lemma 5.2]{Bokut2001}. A different canonical form for the group $G(d,1,n)$ can be found in \cite{BM,KLLO,Sh}.

\end{remark}

\begin{dfn}
The set of canonical words in \eqref{eq:ccan} for $G(d,1,n)$ will be denoted by $\mathcal W(d,1,n)$.
The left factor $s_{1i_1}s_{2i_2}\cdots s_{n-1i_{n-1}}$  of a canonical word will be
called the \textit{prefix}, and similarly the right factor
$s^{(k_1)}_{nj_1}s^{(k_2)}_{nj_2} \cdots
s^{(k_\ell)}_{nj_\ell}$ will be called the \textit{suffix} of the
reduced word. Given a reduced word $\bw$ in the canonical form, we will
denote by $\pw$ the prefix of $\bw$ and by $\sw$ the suffix, and
write $\bw = \pw \sw$.
\end{dfn}

\subsection{Canonical reduced words for $G(d,d,n)$} \label{sec-g}

  For $1\leq i \leq n-1$, we define the words $\ts_{ij}$ in  the same way as with $s_{ij}$ using the generators $\ts_i$.
  When $i=n$, we define
\begin{equation} \label{eq:conv} \ts_{nj} =
 \left\{
   \begin{array}{ll}
     ~[n, n-2, \hdots, j] & \textrm{ if } j\leq n-2, \\
    ~[n]  & \textrm{ if } j=n,n-1, \\
     ~[~] & \textrm{ if } j>n.
   \end{array}
 \right.\end{equation}

  The group $G(d,d,n)$ can be embedded into $G(d,1,n)$ as  a subgroup of index $d$. Indeed, we define $\iota: G(d,d,n) \rightarrow G(d,1,n)$ by
\[ \iota (\ts_n) = s_n^{d-1} s_{n-1} s_n \quad \text{ and } \quad \iota (\ts_i)=s_i \text{ for }1 \le i \le n-1 . \] Then one can check that $\iota$ is a well-defined group homomorphism. Furthermore, we have the following lemma.

\begin{lemma}  \label{lem-tilde}
The homomorphism $\iota$ is an embedding and its image consists of the elements whose canonical words are in the set
\[\{    s_{1i_1}s_{2i_2}\cdots s_{n-1i_{n-1}}s^{(k_1)}_{nj_1}s^{(k_2)}_{nj_2}\cdots s^{(k_\ell)}_{n j_\ell} \in \mathcal W(d,1,n):  k_1+k_2+ \cdots +k_\ell \equiv 0 \, (\text{mod }d) \}.\] This set of reduced words will be denoted  by $\mathcal W(d,d,n)$.
\end{lemma}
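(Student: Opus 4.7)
The plan is to introduce an auxiliary homomorphism detecting the total $s_n$-exponent, verify $\iota$ is a well-defined homomorphism, and then derive the image description by a counting argument.

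First, I would define a map $\epsilon \colon G(d,1,n) \to \ZZ/d\ZZ$ by $\epsilon(s_n) = 1$ and $\epsilon(s_i) = 0$ for $1 \le i \le n-1$. Well-definedness is immediate by inspection of \eqref{eq:order1}--\eqref{eq:l1}: the relation $s_n^d = 1$ maps to $d \equiv 0$, the relation $s_n s_{n-1} s_n s_{n-1} = s_{n-1} s_n s_{n-1} s_n$ has matched $s_n$-counts on both sides, and the remaining relations avoid $s_n$ entirely. Reading off $\epsilon$ on the canonical form of Proposition \ref{ccan} gives $\epsilon(\pw \sw) \equiv k_1 + \cdots + k_\ell \pmod d$, so $\ker \epsilon$ corresponds precisely to the set $\mathcal W(d,d,n)$ described in the lemma. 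Since for each of the $n!$ prefixes there are exactly $d^{n-1}$ choices of suffix whose exponents sum to $0$ modulo $d$ (out of the $d^n$ total), this gives $|\ker \epsilon| = n! \, d^{n-1}$.

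Next, I would verify $\iota$ is a well-defined homomorphism by checking the defining relations of $G(d,d,n)$ under the substitution $\ts_n \mapsto s_n^{d-1} s_{n-1} s_n$ and $\ts_i \mapsto s_i$ for $i < n$. The type-$A$ braid and commutation relations \eqref{eq:b2}, \eqref{eq:cm2} are inherited directly from \eqref{eq:b1}, \eqref{eq:cm1}. The nontrivial checks are
\[
\iota(\ts_n)^2 = s_n^{d-1} s_{n-1} s_n^d s_{n-1} s_n = s_n^{d-1} s_{n-1}^2 s_n = s_n^d = 1,
\]
the commutation $\iota(\ts_n) \iota(\ts_j) = \iota(\ts_j) \iota(\ts_n)$ for $j \le n-3$ (since such $s_j$ commutes with both $s_{n-1}$ and $s_n$), the length-three braid $\iota(\ts_n) \iota(\ts_{n-2}) \iota(\ts_n) = \iota(\ts_{n-2}) \iota(\ts_n) \iota(\ts_{n-2})$ (from commutation of $s_{n-2}$ with $s_n$ plus \eqref{eq:b1} between $s_{n-2}$ and $s_{n-1}$), the order relation $(\iota(\ts_n) \iota(\ts_{n-1}))^d = 1$, and the length-six relation \eqref{eq:l2}; the last two reduce to iterated applications of \eqref{eq:l1}. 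With $\iota$ well-defined, $\epsilon(\iota(\ts_n)) = (d-1) + 0 + 1 \equiv 0 \pmod d$ and $\epsilon(\iota(\ts_i)) = 0$ for $i < n$, so $\mathrm{Im}(\iota) \subseteq \ker \epsilon$.

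The hard part will be the final step: showing $\iota$ is injective, so that $\mathrm{Im}(\iota) = \ker \epsilon$. The cleanest route is to invoke the standard fact $|G(d,d,n)| = n! \, d^{n-1}$ together with the faithful reflection representation of $G(d,d,n)$ on $\CC^n$; under this, $\iota$ agrees with the natural inclusion $G(d,d,n) \hookrightarrow G(d,1,n) \subseteq \GL_n(\CC)$, since $s_n^{d-1} s_{n-1} s_n$ realizes the geometric pseudo-reflection assigned to $\ts_n$. A purely combinatorial alternative is to prove the reverse inclusion $\ker \epsilon \subseteq \mathrm{Im}(\iota)$ directly, by lifting each canonical word to $G(d,d,n)$: the prefix lifts termwise, while each suffix factor $s^{(k)}_{nj}$ can be rewritten using the identity $s_{n-1} = s_n \iota(\ts_n) s_n^{-1}$, with the congruence $\sum k_p \equiv 0 \pmod d$ ensuring all leftover powers of $s_n$ cancel. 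Either route, combined with the equality of cardinalities $|\ker \epsilon| = |G(d,d,n)|$, forces $\iota$ to be injective with image corresponding exactly to $\mathcal W(d,d,n)$.
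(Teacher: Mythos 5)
Your proposal is correct and follows essentially the same route as the paper: show that $\mathrm{Im}(\iota)$ lands in the set of elements whose canonical-suffix exponents sum to $0 \bmod d$, then compare the count $n!\,d^{n-1}$ with the known order of $G(d,d,n)$. Your auxiliary homomorphism $\epsilon$ is a cleaner packaging of the paper's appeal to \eqref{eq:t-1}, and your explicit handling of injectivity (via the faithful reflection representation, or by lifting canonical words back to $G(d,d,n)$ exactly as the paper does in \eqref{eq:scm-1} right after the lemma) fills in a step the paper's two-line proof leaves implicit, since containment plus equal cardinalities alone would not force $\iota$ to be injective without also knowing the image exhausts $\mathcal W(d,d,n)$.
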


\begin{proof}
Since $\iota (\ts_n)= s_{n,n-1}^{(d-1)} s_n$, it is clear from \eqref{eq:t-1} that the elements in the image of $\iota$ have reduced forms in $\mathcal W(d,d,n)$. We count the number of elements in $\mathcal W(d,d,n)$ and find that it is $n! \cdot d^{n-1}$ which is equal to the order of $G(d,d,n)$.  Thus $\iota$ is an embedding.
\end{proof}

The preimage of an element with a reduced word in $\mathcal W(d,d,n)$ can be found in the following way. We note that
\[ \iota (\ts_n \ts_{n-1})^k = s_{n-1} s_{n,n-1}^{(k)} s_n^{d-k} \]
and
\begin{align}
s^{(k_1)}_{nj_1}s^{(k_2)}_{nj_2} & =
s_{n-1}s_{n-1} s_{n,n-1}^{(k_1)} s_n^{d-k_1} s_n^{k_1} s_{n-2,j_1} s_{nj_2}^{(k_2)} \nonumber \\ &=
\iota(\ts_{n-1}) \, \iota(\ts_n\ts_{n-1})^{k_1} \, \iota(\ts_{n-2,j_1}) \, s_{nj_2}^{(k_1+k_2)} \nonumber \\ &= \iota(\ts_{n-1}\ts_{n})^{k_1} \, \iota(\ts_{n-1,j_1}) \,  s_{nj_2}^{(k_1+k_2)}. \label{eq:scm-1}
\end{align}
Using \eqref{eq:scm-1} repeatedly, we can write
\[ s_{1i_1}s_{2i_2}\cdots s_{n-1i_{n-1}}s^{(k_1)}_{nj_1}s^{(k_2)}_{nj_2}\cdots s^{(k_\ell)}_{n j_\ell} = \iota(\tilde w) s_{nj_\ell}^{(k_1+k_2+\cdots + k_\ell)} = \iota(\tilde w \ts_{n-1,j_\ell})  \]
for some $\tilde w \in G(d,d,n)$ with the condition $k_1+k_2+ \cdots +k_\ell \equiv 0 \, (\text{mod }d)$.

From now on, the group $G(d,d,n)$ will be identified with the image of $\iota$ and the set $\mathcal W(d,d,n)$ will be the set of {\em canonical words} for $G(d,d,n)$. As in the case of $G(d,1,n)$, we write $\bw=\pw \sw \in \mathcal W(d,d,n)$ as a product of the prefix $\pw$ and the suffix $\sw$.

\begin{remark}
One can try to obtain canonical words for $G(d,d,n)$ without using an embedding into $G(d,1,n)$. However, we find that a natural set of {\em reduced} words thus obtained is not compatible with the packet decomposition defined in Section \ref{sec-gddn}. See Appendix for $G(3,3,3)$ as an example.

\end{remark}

\subsection{Canonical reduced words for $G(d,r,n)$} \label{sec-gr}

  The group $G(d,r,n)$ can be embedded into $G(d,1,n)$ as  a subgroup of index $r$. Indeed, we define $\tau: G(d,r,n) \rightarrow G(d,1,n)$ by
\[ \tau (\ts) = s_n^r, \quad \tau (\ts_n) = s_n^{d-1} s_{n-1} s_n \quad \text{ and } \quad \tau (\ts_i)=s_i \text{ for }1 \le i \le n-1 . \] Then one can check that $\tau$ is a well-defined group homomorphism.

\begin{lemma}  \label{lem-tilde-r}
The homomorphism $\tau$ is an embedding and its image consists of the elements whose canonical words are in the set
\[ \mathcal W(d,r,n) :=\{    s_{1i_1}s_{2i_2}\cdots s_{n-1i_{n-1}}s^{(k_1)}_{nj_1}s^{(k_2)}_{nj_2}\cdots s^{(k_\ell)}_{n j_\ell} \in \mathcal W(d,1,n):  k_1+k_2+ \cdots +k_\ell \equiv 0 \, (\text{mod }r) \}. \]
\end{lemma}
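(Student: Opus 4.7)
The plan is to mirror the proof of Lemma \ref{lem-tilde}. Since the statement already asserts that $\tau$ is a well-defined homomorphism, it suffices to show two things: (a) $\tau(G(d,r,n)) \subseteq \mathcal W(d,r,n)$, and (b) $|\mathcal W(d,r,n)| = |G(d,r,n)| = n!\,d^n/r$. From these, injectivity of $\tau$ and the coincidence of its image with $\mathcal W(d,r,n)$ follow by a cardinality comparison.

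For (a), the key invariant is the ``$s_n$-charge'' $\sigma(\bw) := k_1 + k_2 + \cdots + k_\ell \pmod{r}$ attached to a canonical word $\bw$ of the form \eqref{eq:ccan}. Each of the generators $\tau(\ts_i) = s_i$ $(i < n)$, $\tau(\ts) = s_n^r$, and $\tau(\ts_n) = s^{(d-1)}_{n,n-1}\,s_n$ contributes a multiple of $r$ to this charge, the latter because $r \mid d$. I then check that rewriting an arbitrary product into canonical form using the relations in $\mathcal R$ preserves $\sigma$ modulo $d$ (hence modulo $r$): the relations \eqref{eq:order-1}, \eqref{eq:cm-1}, \eqref{eq:s-1} either leave $s_n$-exponents alone or merely commute a power of $s_n$ past another generator, while \eqref{eq:t-1} replaces the pair $(k_1, k_2)$ by $(k_2, k_1)$ and therefore preserves the sum. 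Hence $\sigma(\tau(w)) \equiv 0 \pmod{r}$ for every $w \in G(d,r,n)$, which places $\tau(w)$ in $\mathcal W(d,r,n)$.

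For (b), I parametrize suffixes as tuples $(k_1, \ldots, k_n) \in \{0, 1, \ldots, d-1\}^n$, where $k_p = 0$ encodes the absence of the factor $s^{(k_p)}_{np}$, exactly as in the final paragraph of the proof of Proposition \ref{ccan}. The defining condition of $\mathcal W(d,r,n)$ reduces to the single congruence $k_1 + \cdots + k_n \equiv 0 \pmod{r}$. Since $r \mid d$, each residue class modulo $r$ in $\{0, 1, \ldots, d-1\}$ contains exactly $d/r$ elements, so fixing $k_1, \ldots, k_{n-1}$ leaves $d/r$ admissible values for $k_n$. The total suffix count is $d^{n-1} \cdot (d/r) = d^n/r$, and multiplying by the $n!$ prefix choices yields $|\mathcal W(d,r,n)| = n!\,d^n/r = |G(d,r,n)|$, as required. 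The main obstacle is really just the bookkeeping observation that \eqref{eq:t-1} preserves $k_1 + k_2$; with that in hand, the rest of the argument is a clean parallel to Lemma \ref{lem-tilde}.
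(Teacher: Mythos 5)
Your proof is correct and follows the paper's own argument exactly: show the image lies in $\mathcal W(d,r,n)$ and count $|\mathcal W(d,r,n)| = n!\,d^n/r = |G(d,r,n)|$, and your two added details (the invariance of the total $s_n$-exponent modulo $d$ under the relations in $\mathcal R$, and the $d/r$-choices-per-residue-class count of suffixes) are precisely what the paper compresses into ``it is clear'' and ``one sees.'' The only caveat --- shared with the paper's own write-up --- is that a cardinality comparison between the domain and a set \emph{containing} the image does not by itself give injectivity (a homomorphism into a set of the right size can still have a kernel); one also needs the reverse containment that every word of $\mathcal W(d,r,n)$ is in the image, which the paper establishes immediately after the lemma via \eqref{eq:scm-1-r}, so you should cite or reproduce that computation to close the argument.
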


\begin{proof}
Since $\tau (\ts_n)= s_{n,n-1}^{(d-1)} s_n$ and $\tau (\ts)= s_n^{r}$, it is clear that the image of $\tau$ is contained in $\mathcal W(d,r,n)$. One sees that the number of elements in $\mathcal W(d,r,n)$ is $n! \cdot d^{n}/r$ which is equal to the order of $G(d,r,n)$.  Thus $\tau$ is an embedding.
\end{proof}

As with the map $\iota$ for $G(d,d,n)$,  we have
\begin{align}
s^{(k_1)}_{nj_1}s^{(k_2)}_{nj_2} & = \tau(\ts_{n-1}\ts_{n})^{k_1} \, \tau(\ts_{n-1,j_1}) \, s_{nj_2}^{(k_1+k_2)}. \label{eq:scm-1-r}
\end{align}
Using \eqref{eq:scm-1-r} repeatedly, we can write
\[ s_{1i_1}s_{2i_2}\cdots s_{n-1i_{n-1}}s^{(k_1)}_{nj_1}s^{(k_2)}_{nj_2}\cdots s^{(k_\ell)}_{n j_\ell} = \tau(\tilde w) s_{nj_\ell}^{(k_1+k_2+\cdots + k_\ell)} = \tau(\tilde w \, \ts^k \, \ts_{n-1,j_\ell})  \]
for some $\tilde w \in G( d,r ,n)$ and $k \in \mathbb Z_{\ge 0}$ with the condition $k_1+k_2+ \cdots +k_\ell \equiv 0 \, (\text{mod }r)$.

From now on, the group $G(d,r,n)$ will be identified with the image of $\tau$ and an element of the set $\mathcal W(d,r,n)$ will be called a {\em canonical word}. As in the case of $G(d,1,n)$, a canonical word $\bw$ will be written as $\bw = \pw \sw$, where  $\pw$ is the prefix and $\sw$ the suffix of $\bw$.

\section{Fully commutative elements} \label{sec-fc}

\subsection{Case $G(d,1,n)$} In this subsection, let $W \coloneqq G(d,1,n)$ be the complex reflection group defined in the previous section,
with  $S\coloneqq \{ s_1,\ldots,s_{n-1},s_n\} $ the set of generators and the defining relations in \eqref{eq:order1}, \eqref{eq:cm1}, \eqref{eq:b1} and \eqref{eq:l1}.
We consider the free monoid $S^\star$ consisting of all finite length words
$w=s_{i_1}s_{i_2}\cdots s_{i_\ell}$ with $s_{i_j} \in S$. The multiplication in $S^\star$ is defined by the concatenation \[(s_{i_1}\cdots s_{i_\ell})\cdot(s_{m_1}\cdots s_{m_t})= s_{i_1}\cdots s_{i_\ell}s_{m_1}\cdots s_{m_t}.\]

We define a binary relation $\approx$ on $S^\star$ generated by
the relations
\begin{subequations}
\begin{align}
s_is_j&=s_js_i \quad &\mbox{for}\ j+1 < i \le n, \label{eq:cm-2} \allowdisplaybreaks \\
s_{i+1}s_is_{i+1}&=s_is_{i+1}s_i \quad &\mbox{for}\ 1\le i \le n-2, \label{eq:b-2} \allowdisplaybreaks \\
s^{k_1}_ns_{n-1}s^{k_2}_ns_{n-1}&=s_{n-1}s^{k_2}_ns_{n-1}s^{k_1}_n \quad &\mbox{for}\ 1\le k_1, k_2 <d. \label{eq:l-2}
\end{align}
\end{subequations}
Define $R(w) \subset S^\star$ to be the set of reduced
expressions for $w \in W$. Here, as usual, a reduced expression is a word of minimal length for $w$.

\medskip

The following proposition is a generalization of Matsumoto's Theorem and is
crucial to define fully commutative elements.

\begin{prop} \label{prop-rw}
For any $w \in W$, the set $R(w)$ has exactly one equivalence class under $\approx$.
\end{prop}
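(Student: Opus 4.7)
The strategy is to prove Proposition \ref{prop-rw} by reducing it to the uniqueness of the canonical form in Proposition \ref{ccan}. Specifically, I would show that every reduced expression of $w$ can be transformed into the canonical word of $w$ by a sequence of $\approx$-moves; since the canonical form is unique, this forces any two reduced expressions of $w$ to lie in the same $\approx$-class.

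To execute this, I would first verify that the length-preserving relations from the Gr\"obner--Shirshov basis $\mathcal R$ are $\approx$-derivable on reduced words. The commutation \eqref{eq:cm-1} coincides with \eqref{eq:cm-2}; the ladder relation \eqref{eq:s-1}, $s_{ij}s_i = s_{i-1}s_{ij}$, follows from \eqref{eq:cm-2} and \eqref{eq:b-2} by the standard type-$A$ computation that slides the rightmost $s_i$ leftward by interleaved braid and commutation moves; and the suffix relation \eqref{eq:t-1} reduces to \eqref{eq:l-2} when $j=n-1$, while for $j<n-1$ the derivation given in Lemma \ref{lem-only} uses only commutations, braid moves, and the inductive case, all of which are length-preserving. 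I would then run the canonicalization algorithm embedded in the proof of Proposition \ref{ccan} on a reduced expression $u \in R(w)$: isolate the rightmost $s_n^k$, canonicalize the tail (a reduced word in $\langle s_1,\ldots,s_{n-1}\rangle$) via classical Matsumoto for type $A_{n-1}$, commute $s_n^k$ past the prefix generators of index $\le n-2$, and merge the new suffix factor $s^{(k)}_{n,p_{n-1}}$ with the running suffix using \eqref{eq:t-1} and commutations. Every step is length-preserving, so the running expression remains in $R(w)$ throughout.

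The main obstacle, and where I would concentrate the work, is ruling out the need for non-length-preserving moves such as $s_i^2=1$ or $s_n^d=1$. These do appear in derivations of auxiliary identities, most notably in the proof of Lemma \ref{lem-GS-Grd1}, but the only consequence used by the canonicalization is \eqref{eq:t-1} at $j=n-1$, which is already an $\approx$-move via \eqref{eq:l-2}; hence those intermediate non-length-preserving steps can be bypassed. The one remaining way a length drop could occur during the algorithm is in merging two consecutive powers of $s_n$ whose exponents sum to $\ge d$, but this is precluded by the reducedness of the running word, since such a merge would yield a strictly shorter expression for $w$. Thus the canonicalization can be carried out entirely within $\approx$, and any two reduced expressions of $w$ are $\approx$-equivalent to the common canonical word of $w$, proving the proposition.
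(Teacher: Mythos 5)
Your proposal is correct and follows essentially the same route as the paper: the paper's proof also runs the canonicalization algorithm of Proposition \ref{ccan} on a reduced word and observes that the only relations needed are \eqref{eq:cm-1} (which is \eqref{eq:cm-2}) and \eqref{eq:t-1} (which Lemma \ref{lem-only} derives from \eqref{eq:cm-2}, \eqref{eq:b-2} and \eqref{eq:l-2}), with Matsumoto's theorem handling the type $A_{n-1}$ case. Your extra care about length preservation and about why no merge of $s_n$-powers can exceed $d$ makes explicit points the paper's terser argument leaves implicit, but the underlying argument is the same.
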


\begin{proof}
Suppose that $\bw \in R(w)$. We will show that $\bw$ is related to the canonical word in $\mathcal W(d,1,n)$ under $\approx$. We follow the proof of Proposition \ref{ccan}.
Let $\ell$ be the number of occurrences of $s_n^k$ in $\bw$ for various $k$'s, where $k$ is maximal for each occurrence.
If $\ell=0$ then $\bw$ is a reduced word of  an element of the subgroup of type $A_{n-1}$ and one can use only the relations \eqref{eq:cm-2} and \eqref{eq:b-2} to obtain the canonical word by  Matsumoto's Theorem.

Assume $\ell >0$. Then the proof of Proposition \ref{ccan} shows that the relations \eqref{eq:cm-1} and \eqref{eq:t-1}  are used to obtain the canonical word. As the relation \eqref{eq:cm-1} is nothing but \eqref{eq:cm-2}, we have only to check if the relation \eqref{eq:t-1} is derived from \eqref{eq:cm-2}, \eqref{eq:b-2} and \eqref{eq:l-2}. That is already  done in Lemma \ref{lem-only}.
\end{proof}

\begin{ex} \label{ex-over}
Consider $W=G(3,1,2) = <s_1, s_2 \,|\, s_2^3=s_1^2=1,\ s_2s_1s_2s_1=s_1s_2s_1s_2 >$. Then the two reduced expressions $s_2s_1s_2^2s_1$ and $s_1s_2^2s_1s_2$ represent
the same element in $W$. One cannot be transformed into the other, using  only the defining braid relations \eqref{eq:cm1}, \eqref{eq:b1} and \eqref{eq:l1}. However, under $\approx$, the two expressions are related through \eqref{eq:l-2}.
\end{ex}

We define a weaker binary relation $\sim$ on $S^\star$ generated by the
relations \eqref{eq:cm-2} only. The equivalence classes under this relation
are called {\it commutativity classes}. This gives the decomposition of $R(w)$
into commutativity classes:
$$ R(w) = \mathcal C_1 \dot\cup \  \mathcal C_2 \cdots \dot\cup \ \mathcal C_\ell.$$

\begin{dfn}

We say that  $w \in W$ is {\em fully commutative} if $R(w)$ consists of a single commutativity
class; i.e., any reduced word for $w$ can be obtained from any other solely by use of
the braid relations that correspond to commuting generators.

\end{dfn}

Throughout this paper, a subword always means a subword  with all its letters in consecutive positions.
We obtain  the following lemma which is an analogue of Proposition 2.1 in \cite{Stem96}.

\begin{lemma} \label{lem-fc}
An element $w\in W$ is fully commutative if and only if no member of $R(w)$ contains $s_{i+1}s_is_{i+1}$, $1 \le i \le n-2$, or $s_n^{k_1}s_{n-1}s_n^{k_2}s_{n-1}$, $1 \le k_1, k_2 <d$,
as a subword.
\end{lemma}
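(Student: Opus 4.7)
The plan is to exploit that the three generating families \eqref{eq:cm-2}, \eqref{eq:b-2}, \eqref{eq:l-2} of $\approx$ are all length-preserving, so any $\approx$-path between reduced words stays entirely in $R(w)$. Combined with Proposition \ref{prop-rw}, this reduces the problem to locating where on such a path the non-commuting relations \eqref{eq:b-2} and \eqref{eq:l-2} must appear.

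For the ``if'' direction I will argue contrapositively: suppose $w$ is not fully commutative, so there exist $\bw_1, \bw_2 \in R(w)$ with $\bw_1 \not\sim \bw_2$. By Proposition \ref{prop-rw} there is a sequence $\bw_1 = \bv_0 \to \bv_1 \to \cdots \to \bv_m = \bw_2$ in which every step applies one of \eqref{eq:cm-2}, \eqref{eq:b-2}, \eqref{eq:l-2} locally. Because all three relations preserve length, each $\bv_k$ is reduced, hence belongs to $R(w)$. Since $\bw_1 \not\sim \bw_2$, at least one step must apply \eqref{eq:b-2} or \eqref{eq:l-2}; the corresponding $\bv_k$ then contains the required forbidden subword.

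For the ``only if'' direction, take $\bw \in R(w)$ containing a forbidden subword and let $\bw'$ be obtained by applying the associated braid relation in place. Since \eqref{eq:b-2} and \eqref{eq:l-2} preserve length, $\bw' \in R(w)$, and it remains to show $\bw \not\sim \bw'$. If the forbidden subword is $s_{i+1}s_is_{i+1}$, then \eqref{eq:b-2} increases the number of $s_i$'s and decreases the number of $s_{i+1}$'s by one each; since commutations preserve the letter multiset, this gives $\bw \not\sim \bw'$. If the forbidden subword is $s_n^{k_1}s_{n-1}s_n^{k_2}s_{n-1}$, the multisets coincide, so I will use the invariant obtained by extracting, in order, the subsequence of all occurrences of $s_n$ and $s_{n-1}$ in the word. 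Under \eqref{eq:cm-2} the pair $s_n, s_{n-1}$ never commutes (the index condition $j+1 < i$ fails), so this ordered subsequence is preserved by $\sim$; but \eqref{eq:l-2} alters the local portion of this subsequence, which begins with $s_n$ in $\bw$ and with $s_{n-1}$ in $\bw'$, forcing $\bw \not\sim \bw'$.

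The main subtlety is the case $k_1 = k_2$ of \eqref{eq:l-2}: the two sides then share the same letter multiset and even the same multiset of $s_n$-block lengths, so letter counts alone cannot separate them. The ordered $\{s_n, s_{n-1}\}$-subsequence resolves this because \eqref{eq:cm-2} leaves the relative order of these two generators intact. The remainder of the argument is routine and parallels Stembridge's treatment of Coxeter groups in \cite{Stem96}.
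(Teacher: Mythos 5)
Your proof is correct and follows essentially the same route as the paper: the ``if'' direction is the paper's argument (Proposition \ref{prop-rw} plus length-preservation of the generating relations, so every intermediate word stays in $R(w)$ and a non-commutation step exhibits the forbidden subword), and the ``only if'' direction carries out what the paper asserts in one line, namely that applying \eqref{eq:b-2} or \eqref{eq:l-2} at a forbidden subword produces a reduced word in a different commutativity class. Your explicit $\sim$-invariants --- the letter multiset for \eqref{eq:b-2}, and the ordered $\{s_{n-1},s_n\}$-subsequence for \eqref{eq:l-2}, which is indeed needed when $k_1=k_2$ --- supply the justification the paper leaves implicit; the only cosmetic slip is that in the ``if'' direction the forbidden subword may sit in $\bv_{k+1}$ rather than $\bv_k$, which is harmless since both lie in $R(w)$.
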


\begin{proof} We will prove the contrapositive.
If a word $\bw \in R(w)$ has such a subword, the word $\bw$ cannot be transformed into the canonical form only using commutative relations. Thus
$w$ cannot be fully commutative.
Conversely, if $w$ is not fully commutative, there must be a word $\bw \in R(w)$ to which one of the relations in \eqref{eq:b-2} and \eqref{eq:l-2} is applied. Then we see that there exists a word $\bw_1$ obtained from $\bw$, which has $s_{i+1}s_is_{i+1}$, $1 \le i \le n-2$, or $s_n^{k_1}s_{n-1}s_n^{k_2}s_{n-1}$, $1 \le k_1, k_2 <d$,
as a subword.
\end{proof}

The following proposition provides a practical criterion for full commutativity.

\begin{prop} \label{pro-strong}
An element $w\in W$ is fully commutative if and only if there exists $\bw \in R(w)$ whose commutativity class has no member that has as a subword any of the following
\begin{subequations}
\begin{align}
&s_{i+1}s_is_{i+1}, & &s_{i}s_{i+1}s_{i}& (1 \le i \le n-2), \label{rlis} \allowdisplaybreaks\\ 
&s_n^{k_1}s_{n-1}s_n^{k_2}s_{n-1},& & s_{n-1}s_n^{k_1}s_{n-1}s_n^{k_2} & (1 \le k_1, k_2 <d). \label{rlis-1}
\end{align}
\end{subequations}
\end{prop}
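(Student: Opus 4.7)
The plan is to derive both directions of this refinement of Lemma \ref{lem-fc} from two ingredients already in hand: Proposition \ref{prop-rw}, which tells us that $R(w)$ forms a single $\approx$-class, and Lemma \ref{lem-fc}, which characterizes full commutativity by the absence of the subwords $s_{i+1}s_is_{i+1}$ and $s_n^{k_1}s_{n-1}s_n^{k_2}s_{n-1}$ from every reduced word. The refinement is twofold: we want to also forbid the "reversed" subwords $s_is_{i+1}s_i$ and $s_{n-1}s_n^{k_1}s_{n-1}s_n^{k_2}$, and we want this condition only on \emph{one} commutativity class in $R(w)$ rather than on all of $R(w)$.

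For the only-if direction, I would start by assuming $w$ is fully commutative, so $R(w)$ is a single commutativity class. Lemma \ref{lem-fc} already excludes the forward patterns from every $\bw \in R(w)$. To exclude the reversed patterns I would argue by contradiction: if some $\bw \in R(w)$ contained $s_is_{i+1}s_i$, then one application of \eqref{eq:b-2} produces another member of $R(w)$ containing $s_{i+1}s_is_{i+1}$, contradicting Lemma \ref{lem-fc}; similarly, a subword $s_{n-1}s_n^{k_1}s_{n-1}s_n^{k_2}$ would transform by \eqref{eq:l-2} into a reduced word containing $s_n^{k_2}s_{n-1}s_n^{k_1}s_{n-1}$. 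Taking $\bw$ to be any reduced word for $w$ then gives the required existence.

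For the if direction, let $\bw \in R(w)$ and let $\mathcal C$ denote its commutativity class, assumed to contain no member with any of the listed subwords. I would show $R(w)=\mathcal C$ by contradiction. Suppose $\bw'' \in R(w) \setminus \mathcal C$; by Proposition \ref{prop-rw} there is a sequence of $\approx$-moves from $\bw$ to $\bw''$. Commutation moves \eqref{eq:cm-2} preserve commutativity classes, so at some point a braid \eqref{eq:b-2} or a \eqref{eq:l-2} move must be applied. Considering the first such non-commutation move, the word it is applied to still lies in $\mathcal C$, yet it must contain as a subword one side of \eqref{eq:b-2} or \eqref{eq:l-2}, which is precisely one of the patterns \eqref{rlis}–\eqref{rlis-1}. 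This contradicts the hypothesis on $\mathcal C$.

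I do not anticipate a significant obstacle. The only care needed is to verify that the left- and right-hand sides of \eqref{eq:l-2}, namely $s_n^{k_1}s_{n-1}s_n^{k_2}s_{n-1}$ and $s_{n-1}s_n^{k_2}s_{n-1}s_n^{k_1}$, cover exactly the two patterns listed in \eqref{rlis-1} as $k_1,k_2$ range over $\{1,\dots,d-1\}$, and analogously for \eqref{eq:b-2} versus \eqref{rlis}. Once this bookkeeping is in place, the argument is simply a "first non-commutation move" contradiction enabled by Proposition \ref{prop-rw}.
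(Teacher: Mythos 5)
Your proposal is correct and follows essentially the same route as the paper: the forward direction excludes the reversed patterns by applying \eqref{eq:b-2} or \eqref{eq:l-2} to produce a reduced word violating Lemma \ref{lem-fc}, and the converse shows the given commutativity class $\mathcal C$ is closed under all $\approx$-moves and hence equals $R(w)$ by Proposition \ref{prop-rw}. Your ``first non-commutation move'' phrasing merely spells out the paper's terser statement that neither \eqref{eq:b-2} nor \eqref{eq:l-2} can be applied to any member of $\mathcal C$.
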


\begin{proof}
Assume that $w$ is fully commutative. Consider $\bw \in R(w)$. Then by Lemma \ref{lem-fc}, the word $\bw$ does not contain $s_{i+1}s_is_{i+1}$, $1 \le i \le n-2$, or $s_n^{k_1}s_{n-1}s_n^{k_2}s_{n-1}$, $1 \le k_1, k_2 <d$,
as a subword. Moreover, $\bw$ cannot contain $s_is_{i+1}s_i$, $1 \le i \le n-2$, or $s_{n-1}s_n^{k_1}s_{n-1}s_n^{k_2}$, $1 \le k_1, k_2 <d$, either. If it does, we obtain $\bw_1 \in R(w)$ from $\bw$ by applying \eqref{eq:b-2} or \eqref{eq:l-2}, which contains $s_{i+1}s_is_{i+1}$, $1 \le i \le n-2$, or $s_n^{k_1}s_{n-1}s_n^{k_2}s_{n-1}$, $1 \le k_1, k_2 <d$. That is a contradiction. Thus any $\bw \in R(w)$ does not contain any of the words in \eqref{rlis} and \eqref{rlis-1}.

Conversely, assume that there exists  $\bw \in R(w)$ whose commutativity class $\mathcal C$ has no member that contains any of the words in \eqref{rlis} and \eqref{rlis-1}. Then neither of \eqref{eq:b-2} or \eqref{eq:l-2} can be applied to any of the member of $\mathcal C$, and we must have $\mathcal C =R(w)$. Thus, by definition, $w$ is fully commutative.
\end{proof}

As in Section \ref{first}, an expression $s_{i_1} \cdots
s_{i_r} \in W$ will be identified with the word $[i_1,  \dots, i_r]$.
For $w \in W$, let $\bw =[i_1, \dots, i_r] \in R(w)$. Define $\{i,i+1 \}$-sequence of $\bw$ to be the sequence of $i$'s and $i+1$'s obtained by ignoring all entries of $\bw$ different from $i$ and $i+1$. For example, the $\{1,2\}$-sequence of $\bw=[1,2,1,3,4,3,2]$ is $[1,2,1,2]$.

We have the following useful lemma due to Kleshchev and Ram.

\begin{lemma}[\cite{KR}] \label{lem-kr}
The reduced words $\bw$ and $\bv$ are in the same commutativity class if and only if their $\{i, i+1\}$-sequences coincide for each $i=1,2, \dots, n-1$.
\end{lemma}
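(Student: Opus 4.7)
The plan is to prove each direction separately, with the forward implication being essentially immediate. Every elementary move generating $\sim$ swaps adjacent letters $s_a s_b \leftrightarrow s_b s_a$ with $|a-b|\ge 2$. Since $|a-b|=1$ is excluded, at most one of $s_a,s_b$ lies in $\{s_i,s_{i+1}\}$ for any fixed $i$; therefore such a swap leaves the filtered $\{i,i+1\}$-sequence unchanged. Induction on the number of commutation moves transforming $\bw$ into $\bv$ then gives that all $\{i,i+1\}$-sequences agree.

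For the converse I would induct on the common length $\ell=\ell(\bw)=\ell(\bv)$, with $\ell=0$ trivial. Let $s_{i_1}$ be the first letter of $\bw$. The $\{i_1-1,i_1\}$-sequence of $\bw$ (when $i_1\ge 2$) and the $\{i_1,i_1+1\}$-sequence of $\bw$ (when $i_1\le n-1$) each begin with the symbol $i_1$, so by hypothesis the same must hold for $\bv$. Consequently no letter in $\bv$ appearing strictly before its first occurrence of $s_{i_1}$ can equal $s_{i_1-1}$ or $s_{i_1+1}$; any such preceding letter $s_j$ therefore satisfies $|j-i_1|\ge 2$ and commutes with $s_{i_1}$ by \eqref{eq:cm-2}. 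Applying these commutations in succession produces $\bv\sim s_{i_1}\bv'$ with $\ell(\bv')=\ell-1$.

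Writing $\bw=s_{i_1}\bw'$, I then compare the sequences of $\bw'$ and $\bv'$: deleting a leading $s_{i_1}$ from either word strips exactly one $i_1$ from the front of each of the $\{i_1-1,i_1\}$- and $\{i_1,i_1+1\}$-sequences and preserves every other $\{i,i+1\}$-sequence. Hence the matching hypothesis is inherited by $\bw'$ and $\bv'$, which are reduced (a suffix of a reduced word is reduced, and commutations preserve length). The induction hypothesis yields $\bw'\sim\bv'$, and therefore $\bw=s_{i_1}\bw'\sim s_{i_1}\bv'\sim\bv$.

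The only point that might give pause is the verification that the non-commuting pairs of generators in $G(d,1,n)$ coincide with the type-$A_{n-1}$ pattern $\{\{s_i,s_{i+1}\}:1\le i\le n-1\}$; this is read off directly from \eqref{eq:cm-2}. Once this is noted, the higher order $d>2$ of $s_n$ plays no role, because $\sim$ is generated purely by commutation moves and is oblivious to the braid-type relations \eqref{eq:b-2} and \eqref{eq:l-2}. Thus the combinatorial argument of Kleshchev--Ram for symmetric groups transfers verbatim to our setting.
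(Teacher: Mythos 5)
Your proof is correct. There is nothing in the paper to compare it against: Lemma \ref{lem-kr} is quoted from Kleshchev--Ram \cite{KR} without proof, so what you have written is a self-contained replacement rather than a variant of the paper's argument. The forward direction is exactly the observation that a commutation $s_as_b\leftrightarrow s_bs_a$ with $|a-b|\ge 2$ never moves two letters that both lie in $\{i,i+1\}$, and your converse is the standard projection-lemma induction for trace monoids: the agreement of the $\{i_1-1,i_1\}$- and $\{i_1,i_1+1\}$-sequences forces every letter of $\bv$ preceding its first $s_{i_1}$ to differ from $i_1$ by at least $2$, so that occurrence can be commuted to the front, after which stripping the leading $s_{i_1}$ from both words removes one $i_1$ from the front of exactly the two affected sequences and leaves all others untouched, so the hypothesis descends to the shorter words. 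The boundary cases $i_1=1$ and $i_1=n$ are covered by your parentheticals, and your closing remark --- that only the commutation pattern \eqref{eq:cm-2} enters, so the order $d$ of $s_n$ and the relations \eqref{eq:b-2}, \eqref{eq:l-2} are invisible to $\sim$ --- is precisely what justifies importing the type-$A$ statement into $G(d,1,n)$, which is the role the citation plays in the paper. One minor simplification: reducedness of $\bw$ and $\bv$ is never actually needed (matching sequences already force equal multisets of letters, hence equal lengths, and the induction runs for arbitrary words of $S^\star$), so the aside about suffixes of reduced words being reduced can be dropped.
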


Combining Proposition \ref{pro-strong} and Lemma \ref{lem-kr}, we can easily check whether an element $w$ is fully commutative or not.

In Proposition \ref{ccan}, we prove that any element of $W$ can be written as
 \begin{equation} \label{eqn-canonical}
   s_{1i_1}s_{2i_2}\cdots s_{n-1i_{n-1}}s^{(k_1)}_{nj_1}s^{(k_2)}_{nj_2}\cdots s^{(k_\ell)}_{n j_\ell}
 \end{equation}
 where $1 \le i_p \leq p+1$ for $1 \le p \le n-1$,  and $1\leq j_1<j_2<\cdots<j_\ell\leq n$ for $\ell\geq0$, and $1 \le k_p \le d-1$ for $1 \le p \le \ell$. Here we write \[ s^{(k)}_{nj} =s_n^k s_{n-1,j} =s_n^k s_{n-1}\ldots s_{j}\qquad \text{ for } k \ge 1\,\,\text{ and}\,\,\, j \le n. \]
Recall that the part $ s^{(k_1)}_{nj_1}s^{(k_2)}_{nj_2}\cdots s^{(k_\ell)}_{n j_\ell}$ in the canonical form \eqref{eqn-canonical} is called its suffix. Now  we prove the following proposition.

\begin{prop} \label{every suffix}
  Every  suffix is a fully commutative element.
\end{prop}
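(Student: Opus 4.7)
The plan is to invoke Proposition \ref{pro-strong} with the canonical expression $\bw = s^{(k_1)}_{nj_1}\cdots s^{(k_\ell)}_{n j_\ell}$ itself serving as the witness reduced word (it is reduced by Proposition \ref{ccan}); it then suffices to verify that no word in the commutativity class of $\bw$ contains any of the bad subwords listed in \eqref{rlis} or \eqref{rlis-1}. The basic tool throughout is the elementary observation that whenever two letters in $\bw$ fail to commute, their relative order is identical in every member of the commutativity class.

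I would first treat the short-braid bad subwords $s_{i+1}s_is_{i+1}$ and $s_is_{i+1}s_i$ for $1 \le i \le n-2$. In $\bw$ each generator $s_i$ (with $i \le n-1$) occurs exactly once per block $s^{(k_p)}_{nj_p}$ with $j_p \le i$, so any two occurrences of $s_i$ must come from blocks $p < p'$ with $j_p < j_{p'} \le i$; the strict inequality forces $j_p \le i - 1$. Thus block $p$ contains $s_{i-1}$ immediately after its $s_i$, and block $p'$ contains $s_{i+1}$ immediately before its $s_i$. Since $s_{i\pm 1}$ do not commute with $s_i$, each of these two letters is trapped strictly between the two $s_i$'s in every commutativity-equivalent word. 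At least two letters therefore separate the two $s_i$'s, ruling out any contiguous substring of the form $s_is_{i+1}s_i$ (which would require exactly one intermediate letter). The symmetric argument between any two consecutive $s_{i+1}$'s yields blockers $s_i$ (the one just after $s_{i+1}$ in block $p$) and $s_{i+2}$ (the one just before $s_{i+1}$ in block $p'$; or, when $i = n-2$, the $s_n$'s at the start of block $p'$, which also fail to commute with $s_{n-1}$), ruling out $s_{i+1}s_is_{i+1}$.

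For the long-braid bad subwords $s_n^{k_1}s_{n-1}s_n^{k_2}s_{n-1}$ and $s_{n-1}s_n^{k_1}s_{n-1}s_n^{k_2}$, I would apply the same reasoning to any two consecutive occurrences of $s_{n-1}$ in $\bw$. Such occurrences come from blocks $p < p'$ with $j_p, j_{p'} \le n-1$, and again $j_p < j_{p'}$ forces $j_p \le n-2$. Block $p$ then contains $s_{n-2}$ immediately after its $s_{n-1}$, and since $s_{n-2}$ does not commute with $s_{n-1}$ it is trapped between the two $s_{n-1}$'s in every equivalent word. Either long-braid bad subword would require only $s_n$-letters between the two $s_{n-1}$'s, so the trapped $s_{n-2}$ precludes both patterns.

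The main technical point is the careful bookkeeping of precisely which letters occur in each block and the verification that each identified ``blocker'' really is pinned in place by non-commuting neighbours on both sides (so that it cannot escape the window between the two distinguished letters). Once this bookkeeping is set up, Proposition \ref{pro-strong} immediately yields that the suffix is fully commutative.
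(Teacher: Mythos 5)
Your proposal is correct and follows essentially the same route as the paper: both use Proposition \ref{pro-strong} with the canonical suffix as the witness, observe that between any two consecutive occurrences of a letter the block structure places non-commuting neighbours (e.g.\ $s_{i-1}$ after the first and $s_{i+1}$, or $s_n$, before the second) that stay trapped in every member of the commutativity class, and conclude that none of the forbidden subwords can form. Your write-up merely makes the ``trapping'' step more explicit than the paper's appeal to the $\{i,i+1\}$-sequences of Lemma \ref{lem-kr}.
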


\begin{proof}
By Proposition \ref{pro-strong}, we have only to show that no member in the commutative class of a suffix contains as a subword any of the words in \eqref{rlis} and \eqref{rlis-1}. By Lemma \ref{lem-kr}, we need only to investigate relative positions of a letter $p$ and their neighbors $p-1$ and $p+1$.

Assume that $1 \le p \le n-2$. Every consecutive occurrence of $p$ in a suffix  is of the form
\begin{equation} \label{repeat} [p, p-1, \ldots j_p, n^{k_p}, \ldots,  p+1, p, \ldots, j_{p+1}] \quad \text{ with } \ j_p <  j_{p+1} < p+1.\end{equation}  Thus the word $s_p s_{p+1}s_p$ or $s_{p+1} s_p s_{p+1}$ cannot appear in  any suffix or in any member of its commutative class.
Similarly, if $p=n-1$, then the word $s_{n-1}s_{n-2}s_{n-1}$ or $s_{n-2}s_{n-1}s_{n-2}$ cannot appear, as the only difference is that $p+1=n$ may be repeated in \eqref{repeat}.
Finally, one sees that the words $s_{n-1}s_n^{k_1}s_{n-1}s_n^{k_2}$ or $s_n^{k_1}s_{n-1}s_n^{k_2}s_{n-1}$
cannot appear in  any suffix or in its commutative class.
Hence our assertion follows.

\end{proof}

\subsection{Cases $G(d,d,n)$ and $G(d,r,n)$}

Recall that we fixed embeddings of $G(d,d,n)$ and $G(d,r,n)$ into $G(d,1,n)$ and that these groups are identified with the images of the embeddings.

\begin{dfn}
Let $W= G(d,d,n)$ or $G(d,r,n)$ be considered as a subgroup of $G(d,1,n)$ through the embedding $\iota$ or $\tau$ defined in Section \ref{first}, respectively. An element $w$ of $W$ is called {\em fully commutative} if $w$ is fully commutative as an element of $G(d,1,n)$.
\end{dfn}

As mentioned in the introduction, this definition of fully commutative elements coincides with the usual definition for the Coxeter groups of type $B_n$ when $d=2$, $r=1$. On the other hand, it is not compatible with  the usual definition  for the Coxeter groups of type $D_n$ when $d=2$, $r=2$. This will be made more clear in the following sections.

\section{Packets in  $G(d,1,n)$}\label{ch:D}

\subsection{Collections}
The words in $\mathcal {W}(d,1,n)$ which correspond to fully commutative elements will be called {\it fully commutative} and will be grouped based on
their suffixes.

\begin{dfn}
A {\em collection} $\coll{n}{\sw} \subset \mathcal {W}(d,1,n)$ labeled by
a suffix $\sw$ is defined to  be the set of fully commutative words in
$\mathcal {W}(d,1,n)$ whose  suffix is $\sw$.
\end{dfn}

As in the case of type $D$ studied in \cite{FL14}, some of the collections have the same number of elements as we will
prove in the rest of this subsection. The proofs are essentially the same as in the case of type $D$, and we will only sketch the proofs, referring the reader to \cite{FL14} for more detailed proofs.

\begin{lemma}\label{type2}
 For a fixed $k$, $0\leq k \leq n-2$, any collection labeled by a suffix of the form
\begin{equation} \label{ff}
s^{(t_1)}_{n k+1}s^{(t_2)}_{nj_2}s^{(t_3)}_{nj_3}s^{(t_4)}_{nj_4}\cdots s^{(t_\ell)}_{nj_\ell}\quad (\ell \ge 2)
\end{equation}
has the same set of prefixes. In particular, these collections have the same number of elements.
\end{lemma}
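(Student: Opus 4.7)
The plan is to show that for suffixes of the form \eqref{ff} with the same $k$, the condition that $\pw\sw$ be fully commutative depends only on $\pw$ and $k$, not on the remaining parameters $t_1,\dots,t_\ell$ or $j_2,\dots,j_\ell$. Granting this, all such collections share the same set of prefixes and hence the same cardinality. The argument combines Proposition \ref{pro-strong}, Lemma \ref{lem-kr}, and Proposition \ref{every suffix}.

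By Proposition \ref{every suffix} the suffix $\sw$ is itself fully commutative, so any forbidden subword from \eqref{rlis} or \eqref{rlis-1} that obstructs full commutativity of $\pw\sw$ must be realizable as consecutive letters only through the interaction at the boundary between $\pw$ and $\sw$. Since $\pw$ uses only the generators $s_1,\dots,s_{n-1}$ and contains at most one occurrence of $s_{n-1}$ (the first letter of $s_{n-1,i_{n-1}}$ when $i_{n-1}\le n-1$), the possible boundary configurations are highly constrained.

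The key step is to use the commutation $s_ns_j=s_js_n$ for every $j\le n-2$ to slide the second power $s_n^{t_2}$ (which exists because $\ell\ge 2$) leftward past the low-index trailing letters $s_{n-2},\dots,s_{k+1}$ of the first piece $s^{(t_1)}_{n,k+1}$, until $s_n^{t_2}$ lies immediately to the right of the $s_{n-1}$ of that piece. This rearrangement (which stays within the commutativity class) forces every member of the class to exhibit, at the boundary, the local pattern $s_n^{t_1}s_{n-1}s_n^{t_2}$, regardless of the specific values of $t_2,j_2$; further pieces $s^{(t_p)}_{n,j_p}$ with $p\ge 3$ sit farther to the right (as $j_p>k+1$) and, by an analogous commutation, cannot contribute letters to the boundary that would depend on their specifics. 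Consequently, the question of whether a Type-1 forbidden subword ($s_{i+1}s_is_{i+1}$ or $s_is_{i+1}s_i$) arises at the boundary reduces to the interaction between the tail $s_{n-1,i_{n-1}}$ of $\pw$ and the trail $s_{n-1}s_{n-2}\cdots s_{k+1}$ of the first piece; the question of a Type-2 forbidden subword ($s_n^as_{n-1}s_n^bs_{n-1}$ or $s_{n-1}s_n^as_{n-1}s_n^b$) reduces to whether the configuration $s_n^{t_1}s_{n-1}s_n^{t_2}$ can be flanked by a third $s_{n-1}$ supplied by $\pw$ or by the first piece. Both conditions depend on $\pw$ and $k$ alone.

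The main obstacle is the careful verification that none of the later pieces of $\sw$ can be rearranged so as to create a new forbidden subword at the boundary depending on their specifics; one must also confirm that the low-index letters of $s_{n-1,i_{n-1}}$ (in particular $s_{n-2}$, which does not commute with $s_{n-1}$) are properly accounted for in all case distinctions. Since this bookkeeping is essentially identical to the type-$D_n$ analysis carried out in \cite{FL14}, I would follow that reference to complete the case-by-case verification.
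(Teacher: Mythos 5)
Your proposal is sound and rests on the same core observation as the paper's proof: since the suffix is itself fully commutative (Proposition \ref{every suffix}) and the later pieces $s^{(t_p)}_{n,j_p}$ ($p\ge 2$) involve only letters already present in the first piece, the admissibility of a prefix is governed solely by its interaction with $s^{(t_1)}_{n,k+1}$ together with the presence of at least one further power of $s_n$. The paper packages this as a two-way comparison with the reference collection labeled by $s^{(t_1)}_{n,k+1}s_n$ rather than as your direct forbidden-pattern analysis, but the substance (and the deferral of the detailed bookkeeping to \cite{FL14}) is the same.
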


\begin{proof}

Let $\bw'$ be a suffix of the form \eqref{ff}. Then $\bw'$ has the
suffix $\bw_1:=s^{(t_1)}_{n\, k+1} s_{n}$ as a subword, and any prefix appearing in the collection
$\coll{n}{\bw'}$ also appears in $\coll{n}{\bw_1}$.

Conversely, assume that $\pw$ is a prefix of a fully commutative word appearing in the collection $\coll{n}{\bw_1}$.
Since the prefix and suffix of a fully commutative word are individually fully commutative, we only assume
that there is some letter $r \le n-1$ which appears in both $\pw$ and $\sw$.  From the condition $$1\leq j_1<j_2<\cdots<j_\ell\leq n$$
on the suffix $\sw$, we see that the letter $r$ also appears in $\bw_1$.
Then the  full commutativity of $\bw_0\bw_1$ implies that of $\pw \sw$, and $\pw$ is a prefix of $\sw$.
\end{proof}

\begin{prop}\label{type size}
For $1 \leq k\le n-2$, the collection labeled by the suffix $s^{(t')}_{nk}$, $1 \le t' <d$, has the same number of elements as any of the collections labeled by the suffix of the form
$$s^{(t_1)}_{n k+1}s^{(t_2)}_{nj_2}s^{(t_3)}_{nj_3}s^{(t_4)}_{nj_4}\cdots s^{(t_\ell)}_{nj_\ell}\quad (\ell \ge 2). $$
\end{prop}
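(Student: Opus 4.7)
The plan is to exhibit an explicit bijection between the two collections. By Lemma~\ref{type2}, any suffix $\sw$ of the form~\eqref{ff} labels a collection with the same prefix set as the one indexed by the representative $\bw_1 := s^{(t_1)}_{n,k+1}\,s_n$, so it suffices to construct a bijection
\[
\Phi : \coll{n}{s^{(t')}_{nk}} \longrightarrow \coll{n}{\bw_1}.
\]
The two suffixes are nearly identical: both contain $s_{k+1}, s_{k+2},\ldots, s_{n-1}$ once each together with powers of $s_n$, but $s^{(t')}_{nk}$ ends with an additional $s_k$ whereas $\bw_1$ carries an additional trailing $s_n$.

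The first step is to characterize the admissible prefixes $\pw = s_{1,i_1} \cdots s_{n-1,i_{n-1}}$ for each suffix. By Lemma~\ref{lem-kr} and Proposition~\ref{pro-strong}, full commutativity of $\pw \cdot \sw$ is detected entirely by the $\{p, p+1\}$-sequences ($1 \le p \le n-1$). Since $\pw$ contains no $s_n$, its contribution to each such sequence depends only on the tuple $(i_1, \ldots, i_{n-1})$. A direct comparison shows that the $\{p, p+1\}$-sequences of $\pw \cdot s^{(t')}_{nk}$ and $\pw \cdot \bw_1$ differ only at $p \in \{k-1, k, n-1\}$, and the resulting constraints at these indices turn out to be independent of the exponents $t'$ and $t_1$.

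The bijection $\Phi$ is then built by modifying only the boundary blocks of the prefix. The trailing $s_n$ in $\bw_1$ forces the last block of $\pw$ to be empty (so $i_{n-1}=n$), while the trailing $s_k$ in $s^{(t')}_{nk}$ instead imposes a coupling between $i_k$ and $i_{k+1}$ coming from the modified $\{k-1, k\}$- and $\{k, k+1\}$-sequences. The map $\Phi$ swaps these two conditions, and its inverse reverses the rule; preservation of full commutativity under $\Phi$ reduces to a routine check of forbidden subwords in the three affected sequences, as the remaining constraints match automatically.

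The main obstacle will lie in the first step, namely producing a sufficiently clean description of the admissible tuples $(i_1, \ldots, i_{n-1})$. The relation~\eqref{eq:l-2}, which involves arbitrary exponents $k_1, k_2$, makes the $\{n-1, n\}$-analysis more subtle than in the Coxeter case, and some care is required to confirm independence from $t'$ and $t_1$. Once this description is in place, the bijection $\Phi$ should parallel the type~$D$ argument of \cite{FL14}, the essential new observation being that the trailing $s_n$ and the missing $s_k$ act on complementary parts of the prefix, so that the two families of constraints can be matched bijectively.
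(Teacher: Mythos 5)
Your overall strategy coincides with the paper's: reduce via Lemma~\ref{type2} to the single representative $\bw_1 = s^{(t_1)}_{n,k+1}s_n$ and then exhibit an explicit bijection with $\coll{n}{s^{(t')}_{nk}}$. However, what you have written is a plan rather than a proof, and the structural claims on which the plan rests are incorrect. The trailing $s_n$ in $\bw_1$ does \emph{not} force the last block of the prefix to be empty: in $G(2,1,3)$ with $k=1$, the word $[2,1,3,2,3]=s_{2,1}\bw_1$ lies in $\coll{3}{[3,2,3]}$ and has $i_{n-1}=i_2=1$. What full commutativity actually forces is that the last \emph{letter} of the prefix be at most $k$; the prefix may end with a long descending block. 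The same example undermines the claim that full commutativity is ``detected entirely by the $\{p,p+1\}$-sequences'' in the way you intend: the $\{2,3\}$-sequence of $[2,1,3,2,3]$ is $[2,3,2,3]$, which contains the forbidden pattern of \eqref{rlis-1} as a subsequence, yet the word is fully commutative because the intervening $1$ prevents the pattern from ever becoming consecutive. Lemma~\ref{lem-kr} only says the sequences determine the commutativity class; whether that class contains a forbidden \emph{consecutive} subword is not read off from the sequences alone.

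More importantly, the bijection itself is never constructed, and it cannot be a local ``swap of boundary conditions.'' The map that works is genuinely non-local: a prefix admissible for $s^{(t')}_{nk}$ either ends in a letter $<k$, in which case it is carried over unchanged, or ends in an increasing staircase $[m,m+1,\dots,n-1]$ formed by singleton blocks $s_m s_{m+1}\cdots s_{n-1}$ (equivalently, its last letter is $n-1$), in which case that entire staircase must be replaced by the single descending block $s_{mk}=[m,m-1,\dots,k]$ before attaching $\bw_1$; the inverse converts a terminal block $s_{mk}$ back into such a staircase. This surgery can modify all blocks from position $m$ up to $n-1$, not just those near $k$ and $n-1$, and producing it together with the verification that it preserves full commutativity is the actual content of the proposition. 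Without it your argument does not close.
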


\begin{proof}
Let $\bw_1= s^{(t_1)}_{n \, k+1}s_{n}$ and $\bw_2=s^{(t')}_{nk}$.  By Lemma
\ref{type2}, it is enough to establish a bijection between the
collections $\coll{n}{\bw_1}$ and $\coll{n}{\bw_2}$. We define a map
$\sigma: \coll{n}{\bw_2} \to \coll{n}{\bw_1}$ as follows.
  Suppose that $ \pw$ is the prefix of the word $\bw=\pw \bw_2=\pw [n^{t'}, n-1, \dots , k] \in \coll{n}{\bw_2}$,
  and let $r$ be the last letter of $\pw$. Then by the condition of full commutativity, we must have $r<k$ or $r=n-1$.

If $r <k$ we simply define $\sigma(\bw)=\pw \bw_1$. If $r=n-1$, we take $m\geq k$ to be the smallest letter such that the string $[m, m+1, \hdots, n-1]$ is a right factor of $\pw$.
Then we have $\bw=s_{1i_1}\cdots s_{m-1i_{m-1}} s_ms_{m+1} \cdots s_{n-1} \bw_2$,  and we define
$$  \sigma(\bw) = s_{1i_1}\cdots s_{m-1i_{m-1}}s_{mk}\bw_1. $$
Then we have $\sigma(\coll{n}{\bw_2})\subset \coll{n}{\bw_1}$.

 Now we define a map $\eta:\coll{n}{\bw_1} \to  \coll{n}{\bw_2}$.
  Suppose that $\bw = \pw\bw_1=\pw[n^{t_1}, n-1, \hdots, k+1, n] \in \coll{n}{\bw_1}$,
and let $r$ be the last letter of $\pw$. Then by the condition of
full commutativity, we must have $1\le r \le k$.

 If $r<k$, then we define $\eta(\pw \bw_1)=\pw \bw_2$.
If $r=k$, then the final non-empty segment of the prefix is $s_{mk}$ for some $m$ with $k \leq m \leq n-1$.  We define
\[\eta(\bw) =\eta( s_{1i_1}\cdots s_{m-1i_{m-1}}s_{mk}\bw_1)= s_{1i_1}\cdots s_{m-1i_{m-1}}s_{m}s_{m+1} \cdots s_{n-1} \bw_2. \]
One sees that $\eta(\coll{n}{\bw_1}) \subset  \coll{n}{\bw_2}$, and that $\eta$ is both a left and a right inverse of
$\sigma$.
\end{proof}

\begin{lemma}\label{type size2}
The collections labeled by the suffixes $s^{t}_{n}$  and $s^{(t)}_{n, n-1}$ $(1 \le t <d)$ all have the same set of prefixes.
\end{lemma}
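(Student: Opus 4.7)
The plan is to prove the stronger assertion that, for every suffix $\sw \in \{s^{t}_n, s^{(t)}_{n,n-1} : 1 \le t < d\}$, the set of prefixes appearing in $\coll{n}{\sw}$ equals the set $P$ of canonical prefixes $\pw = s_{1i_1}s_{2i_2}\cdots s_{n-1, i_{n-1}}$ that are themselves fully commutative. Since $\pw$ contains only the generators $s_1, \ldots, s_{n-1}$, this coincides with full commutativity as an element of the type $A_{n-1}$ Coxeter subgroup. Once this is established for each such $\sw$, the lemma is immediate.

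The easy direction is that $\pw\sw \in \coll{n}{\sw}$ forces $\pw \in P$: if $\pw \notin P$, Proposition~\ref{pro-strong} supplies a forbidden consecutive subword in some commutative class member of $\pw$, and appending $\sw$ to that member preserves the subword inside a commutative class member of $\pw\sw$, contradicting full commutativity of $\pw\sw$.

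For the converse I would take $\pw \in P$ and verify, via Proposition~\ref{pro-strong}, that no commutative class member of $\pw\sw$ contains any forbidden consecutive subword. The decisive observation is that $s_n$ and $s_{n-1}$ do not commute, so in any commutative class member the $t \ge 1$ copies of $s_n$ from $\sw$ remain sandwiched between the unique $s_{n-1}$ of $\pw$ (present precisely when $i_{n-1} \le n-1$) and the trailing $s_{n-1}$ (in the case $\sw = s^{(t)}_{n,n-1}$); in particular no $s_n$ ever becomes adjacent to an $s_{n-1}$ originating from $\pw$. Using this, I would dispatch each kind of forbidden subword in turn. For short braids $s_i s_{i+1} s_i$ or $s_{i+1} s_i s_{i+1}$ with $i \le n-3$, the $\{i,i+1\}$-sequence of $\pw\sw$ coincides with that of $\pw$, so any such braid in a class member of $\pw\sw$ would, by Lemma~\ref{lem-kr}, project to one in a class member of $\pw$, contradicting $\pw \in P$. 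For the long braid $s^{k_1}_n s_{n-1} s^{k_2}_n s_{n-1}$, the leading $s_n^{k_1}$ forces some $s_n$ immediately to the left of an $s_{n-1}$ of $\pw$, which the trapping observation forbids; for $s_{n-1} s^{k_1}_n s_{n-1} s^{k_2}_n$, the two $s_{n-1}$'s would be the $\pw$ copy and the $\sw$ copy respectively, but then the trailing $s^{k_2}_n$ requires $s_n$'s after the last letter of $\pw\sw$, which is impossible.

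The delicate case---and the main obstacle---is the short braid at $i = n-2$ when $\sw = s^{(t)}_{n,n-1}$, especially with the last factor $s_{n-1, i_{n-1}}$ of $\pw$ nonempty and $i_{n-1} \le n-2$, so that $\pw\sw$ locally reads $\cdots s_{n-1} s_{n-2} \cdots s_{i_{n-1}} s^t_n s_{n-1}$. Here a braid $s_{n-1} s_{n-2} s_{n-1}$ would place the two $s_{n-1}$'s at distance two, but between them in any class member sit at least the $t \ge 1$ trapped $s_n$'s together with the tail letters $s_{n-2}, \ldots, s_{i_{n-1}}$, which is already more than one letter; a braid $s_{n-2} s_{n-1} s_{n-2}$ cannot involve the trailing $s_{n-1}$ (nothing follows it), and if it involves the $\pw$ copy of $s_{n-1}$ then both flanking $s_{n-2}$'s come from $\pw$, so that after commuting away the $s_n$'s (which commute with every $s_j$ for $j \le n-2$) the same braid appears inside $\pw$, contradicting $\pw \in P$. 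Carefully tracking these configurations under the admissible commutations of $s^t_n$ past $s_{n-2}, \ldots, s_{i_{n-1}}$ completes the verification.
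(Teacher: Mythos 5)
Your proposal is correct, and it in fact establishes more than the lemma asks: you identify the common prefix set of all the collections $\coll{n}{s_n^t}$, $\coll{n}{s^{(t)}_{n,n-1}}$ as exactly the set of fully commutative type-$A_{n-1}$ prefixes. The paper's own proof is a one-sentence assertion that swapping one of these suffixes for another ``does not affect full commutativity''; your case analysis via Proposition~\ref{pro-strong} is the verification that assertion tacitly requires, and the stronger characterization you prove is precisely what the paper later invokes (in the proof of Lemma~\ref{diag}) when it counts these collections by the Catalan number $C_n$. So your route is the paper's idea made rigorous, plus the payoff needed downstream.

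Two points of phrasing deserve tightening. First, for $i\le n-3$ you appeal to Lemma~\ref{lem-kr}: the coincidence of $\{i,i+1\}$-sequences of $\pw\sw$ and $\pw$ does not by itself transfer a braid subword, since a sequence containing $i,i+1,i$ consecutively need not force a consecutive $s_is_{i+1}s_i$ in any class member (e.g.\ $[2,1,3,2]$ is fully commutative). What does work is the projection you allude to: delete the $\sw$-letters (all equal to $n$ or $n-1$, hence not among $\{i,i+1\}$) from the offending class member; commutation moves restrict to commutation moves on the $\pw$-letters and deletion preserves consecutiveness, so the braid lands in a class member of $\pw$. Second, your ``trapping'' statement that no $s_n$ ever becomes adjacent to the $s_{n-1}$ of $\pw$ is literally false — the $s_n$'s can be commuted leftward past $s_{n-2},\dots,s_{i_{n-1}}$ to sit immediately to the \emph{right} of that $s_{n-1}$ — but the directional fact you actually use (no $s_n$ can appear to the \emph{left} of the $\pw$-copy of $s_{n-1}$, nor to the right of the trailing one) is correct and suffices for the long-braid cases.
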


\begin{proof}
Assume that $\pw$ is a prefix of $s^{t}_n$ or $s^{(t)}_{n, n-1}$, i.e., $\pw s^{t}_n$  or $\pw s^{(t)}_{n, n-1}$ is fully commutative. Then replacing the suffix with any of the suffixes $s^{t'}_{n}$  and $s^{(t')}_{n, n-1}$ $(1 \le t' <d)$ does not affect full commutativity.
\end{proof}

\subsection{Packets} \label{sec-pac}

The results in the previous subsection lead us to the following definition.

\begin{dfn} \label{dfn-lp}
For $0 \le k \le n$, we define the {\em $(n,k)$-packet} of
collections:
\begin{itemize}
\item The $(n,0)$-packet is the set of collections labeled by suffixes of the form
\[s^{(t_1)}_{n1}s^{(t_2)}_{nj_2}s^{(t_3)}_{nj_3}s^{(t_4)}_{nj_4}\cdots s^{(t_\ell)}_{nj_\ell}\quad (\ell \ge 2).\]
\item The $(n,k)$-packet, $1\le k \le n-2$, is the set of collections labeled by $s^{(t)}_{nk}$ or suffixes of the form
$s^{(t_1)}_{n\, k+1}s^{(t_2)}_{nj_2}s^{(t_3)}_{nj_3}s^{(t_4)}_{nj_4}\cdots s^{(t_\ell)}_{nj_\ell}\quad (\ell \ge 2)$.
\item The $(n,n-1)$-packet contains the collections labeled by $s^{(t)}_n=[n^t]$ or $s^{(t)}_{n,n-1}=[n^t,n-1]$.
\item The $(n,n)$-packet contains only the collection labeled by the empty suffix $[~]$.
\end{itemize}
We will denote the $(n,k)$-packet by $\PP(n,k)$.  As an example,
Table~\ref{G213-packets} shows all of the packets in the case of $G(2,1,3)$ (or $B_3$).
\end{dfn}

\begin{table}
\pgfdeclarelayer{background}
\pgfdeclarelayer{foreground}
\pgfsetlayers{background,main,foreground}
\tikzstyle{coll} = [draw, very thick, fill=white,
    rectangle, rounded corners, inner sep=18pt, inner ysep=15pt, minimum height=2in]
\tikzstyle{fancytitle} =[fill=black, text=white]

\[
\begin{tikzpicture}[font=\footnotesize]

\node[coll,minimum height=.6cm, inner sep = 2mm] (C4213){
\begin{minipage}{.2\textwidth}\begin{center}
\underline{$\mathbf c^{3}_{[3,2,1,3]}$}\\[2mm]
 $[3,2,1,3]$
 \end{center}
 \end{minipage}
};

\node[coll, minimum height=.6cm, right of=C4213, xshift=3cm, inner sep = 2mm] (C42132){
\begin{minipage}{.2\textwidth}\begin{center}
\underline{$\mathbf c^{3}_{[3,2,1,3,2]}$}\\[2mm]
 $[3,2,1,3,2]$
 \end{center}
 \end{minipage}
};

\node[coll, minimum height=.6cm, right of=C42132, xshift=3cm, inner sep = 2mm] (C421324){
\begin{minipage}{.2\textwidth}\begin{center}
\underline{$\mathbf c^{3}_{[3,2,1,3,2,3]}$}\\[2mm]
 $[3,2,1,3,2,3]$
 \end{center}
 \end{minipage}
};

\node[coll, below of=C4213, yshift=-1.8cm, xshift=.85cm, minimum height=1cm, inner sep = 2mm] (C421){
\begin{minipage}{.35\textwidth}\begin{center}
\underline{$\mathbf c^{3}_{[3,2,1]}$}\\[2mm]
\begin{multicols}{3}
   $[3,2,1]$\\[2pt]
   $[2,3,2,1]$\\[2pt] \columnbreak
   $[1,2,3,2,1]$\\[2pt]
   \end{multicols}
 \end{center}
 \end{minipage}
};

\node[coll, right of=C421, xshift=5.5cm, minimum height=1cm, inner sep=2mm] (C423){
\begin{minipage}{.35\textwidth}\begin{center}
\underline{$\mathbf c^{3}_{[3,2,3]}$}\\[2mm]
\begin{multicols}{3}
   $[3,2,3]$\\[2pt]
   $[1,3,2,3]$\\[2pt]
   $[2,1,3,2,3]$\\[2pt]
\end{multicols}
 \end{center}
 \end{minipage}
};

\node[coll, below of=C42132, yshift=-4.8cm,
minimum height=1cm, inner sep=3mm] (C42){
\begin{minipage}{.65\textwidth}\begin{center}
\underline{$\mathbf c^{3}_{[3,2]}$}\\[2mm]
\begin{multicols}{5}
   $[3,2]$\\[2pt]
   $[2,3,2]$\\[2pt]
   $[1,3,2]$\\[2pt]
   $[2,1,3,2]$\\[2pt]
   $[1,2,3,2]$\\[2pt]
   \end{multicols}
 \end{center}
 \end{minipage}
};

\node[coll, below of=C42, yshift=-1cm,
minimum height=1cm, inner sep=3mm] (C4){
\begin{minipage}{.65\textwidth}\begin{center}
\underline{$\mathbf c^{3}_{[3]}$}\\[2mm]
\begin{multicols}{5}
   $[3]$\\[2pt]
   $[2,3]$\\[2pt]
   $[1,3]$\\[2pt]
   $[2,1,3]$\\[2pt]
   $[1,2,3]$\\[2pt]
   \end{multicols}
 \end{center}
 \end{minipage}
};

\node[coll, below of=C4, yshift=-2cm,
minimum height=1cm, inner sep=3mm] (C){
\begin{minipage}{.65\textwidth}\begin{center}
\underline{$\mathbf c^{3}_{[~]}$}\\[2mm]
\begin{multicols}{5}
   $[~]$\\[2pt]
   $[2]$\\[2pt]
   $[1]$\\[2pt]
   $[2,1]$\\[2pt]
   $[1,2]$\\[2pt]
   \end{multicols}
 \end{center}
 \end{minipage}
};

\begin{pgfonlayer}{background}

\path (C4213.north -| C.west)+(-.5cm,.4cm) node (ul1) {};
\path (C421324.south -| C.east)+(.5cm,-.3cm) node (lr1) {};
\draw[very thick, rounded corners, fill=black!20] (ul1) rectangle (lr1);
\node[fancytitle, xshift=.5cm,  anchor=west] at (ul1) (P1title) {$\mathcal P(3,0)$};

\path (C421.north -| C.west)+(-1cm,.4cm)node (ul2) {};
\path (C423.south -| C.east)+(1.2cm,-.3cm) node (lr2) {};
\draw[very thick, rounded corners, fill=black!20] (ul2) rectangle (lr2);
\node[fancytitle, xshift=.5cm,  anchor=west] at (ul2) (P2title) {$\mathcal P(3,1)$};

\path (C42.north -| C.west)+(-.5cm,.4cm)node (ul3) {};
\path (C4.south -| C.east)+(.5cm,-.3cm) node (lr3) {};
\draw[very thick, rounded corners, fill=black!20] (ul3) rectangle (lr3);
\node[fancytitle, xshift=.5cm,  anchor=west] at (ul3) (P3title) {$\mathcal P(3,2)$};


\path (C.north west)+(-.5cm,.4cm)node (ul5) {};
\path (C.south east)+(.5cm,-.3cm) node (lr5) {};
\draw[very thick, rounded corners, fill=black!20] (ul5) rectangle (lr5);
\node[fancytitle, xshift=.5cm,  anchor=west] at (ul5) (P5title) {$\mathcal P(3,3)$};

\end{pgfonlayer}
\end{tikzpicture}
\]
\caption{The packets of $G(2,1,3)$}\label{G213-packets}
\end{table}

We record the main property of  a packet as a corollary.
\begin{cor} \label{num}
The collections in a fixed packet $\PP(n,k)$ have the same number of elements.
\end{cor}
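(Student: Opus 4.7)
The plan is to verify the corollary by working through the four cases of Definition \ref{dfn-lp} separately and invoking the results just established. All the real work has already been done in Lemmas \ref{type2} and \ref{type size2} and Proposition \ref{type size}, so this statement is essentially a bookkeeping corollary that assembles those cardinality equalities into the packet framework.

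For the $(n,0)$-packet, every collection is labeled by a suffix of the form $s^{(t_1)}_{n1}s^{(t_2)}_{nj_2}\cdots s^{(t_\ell)}_{nj_\ell}$ with $\ell \ge 2$, so Lemma \ref{type2} applied with $k=0$ says that all such collections share the same set of prefixes and in particular have equal cardinality. For the $(n,k)$-packet with $1 \le k \le n-2$, the packet contains collections of two sorts: the single-term ones $\coll{n}{s^{(t)}_{nk}}$ with $1 \le t \le d-1$, and the multi-term ones $\coll{n}{s^{(t_1)}_{n\, k+1}s^{(t_2)}_{nj_2}\cdots s^{(t_\ell)}_{nj_\ell}}$ with $\ell \ge 2$. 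Lemma \ref{type2} equates the cardinalities among the multi-term collections, and Proposition \ref{type size} equates the cardinality of each single-term collection $\coll{n}{s^{(t)}_{nk}}$ with that of any multi-term one; chaining these equalities gives the claim for this packet. For the $(n,n-1)$-packet the collections are labeled by $s_n^t$ or $s^{(t)}_{n,n-1}$ with $1 \le t \le d-1$, and Lemma \ref{type size2} directly asserts that they share the same set of prefixes. Finally, the $(n,n)$-packet contains only the collection labeled by the empty suffix, so the statement is vacuous.

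The main thing to watch is simply that the exponents $t, t_1, \ldots, t_\ell$ range uniformly over $\{1, \ldots, d-1\}$ in each cited result, so no restriction on these parameters enters the argument. Beyond that, no genuine obstacle remains, and the corollary is an immediate consequence of the preceding lemmas and proposition.
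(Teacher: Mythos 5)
Your proposal is correct and follows exactly the paper's argument: the paper's proof of this corollary is a one-line citation of Lemma \ref{type2}, Proposition \ref{type size} and Lemma \ref{type size2}, and your case-by-case assembly is just the explicit version of that. Nothing is missing.
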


\begin{proof}
The assertion follows from Lemma \ref{type2}, Proposition \ref{type size}  and Lemma \ref{type size2}.
\end{proof}

We count the number of collections in a packet and obtain:
\begin{prop} \label{prop-d-1}
The size of the packet $\PP(n,k)$ of $G(d,1,n)$ is
$$ \left | \PP(n,k) \right| =
   \left\{\begin{array}{cl}
   (d^{n-1}-1)(d-1) & \textrm{ if } k = 0, \\
   d^{n-k-1}(d-1)  & \textrm{ if } 1\le k \le n-2, \\
   2(d-1) & \textrm{ if } k= n-1, \\
   1 & \textrm{ if } k= n. \\
   \end{array}
   \right.
$$
Hence we have $ \sum_{k=0}^n  \left | \PP(n,k) \right|  =  d^{n}$.
\end{prop}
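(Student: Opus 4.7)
The plan is direct combinatorial enumeration. By Proposition~\ref{ccan}, a suffix
\[ s^{(k_1)}_{nj_1}s^{(k_2)}_{nj_2}\cdots s^{(k_\ell)}_{nj_\ell} \]
is determined uniquely by the subset $\{j_1<\cdots<j_\ell\}\subseteq\{1,\dots,n\}$ together with the exponents $k_p\in\{1,\dots,d-1\}$; equivalently, by a single function $\mathbf{k}\colon\{1,\dots,n\}\to\{0,1,\dots,d-1\}$ under the convention that $\mathbf{k}(j)=0$ means $j$ is not in the support. This bijection identifies the set of suffixes with $\{0,1,\dots,d-1\}^n$, so there are exactly $d^n$ suffixes in total. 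Since by Definition~\ref{dfn-lp} each suffix labels a collection in exactly one packet, the identity $\sum_{k=0}^{n}|\PP(n,k)|=d^n$ becomes automatic once the four individual counts are in place.

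For those counts I will simply read off the cases of Definition~\ref{dfn-lp} and enumerate using the bijection above. The packet $\PP(n,n)$ is the singleton labeled by the empty suffix, and $\PP(n,n-1)$ consists of the $2(d-1)$ single-factor suffixes with $j_1\in\{n-1,n\}$ and $k_1\in\{1,\dots,d-1\}$. For $1\le k\le n-2$ I split $\PP(n,k)$ into its two families: the single-factor suffixes $s^{(t)}_{nk}$ contribute $d-1$, while the suffixes of length $\ell\ge 2$ with $j_1=k+1$ contribute $(d-1)(d^{n-k-1}-1)$, coming from the $d-1$ choices of $k_1$ times the number of nonzero functions $\{k+2,\dots,n\}\to\{0,\dots,d-1\}$ that parametrize the tail $s^{(k_2)}_{nj_2}\cdots s^{(k_\ell)}_{nj_\ell}$. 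Adding yields $(d-1)d^{n-k-1}$. The same argument at $k=0$, where only the length-$\ge 2$ family appears and $j_1=1$ is forced, gives $(d-1)(d^{n-1}-1)$.

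No step poses a real obstacle; the only point that deserves care is verifying that the piecewise prescription in Definition~\ref{dfn-lp} does partition every suffix into exactly one packet. Concretely, a suffix with $\ell\ge 2$ lies in $\PP(n,j_1-1)$, a length-one suffix with $j_1\in\{1,\dots,n-2\}$ lies in $\PP(n,j_1)$, the length-one suffixes with $j_1\in\{n-1,n\}$ sit together in $\PP(n,n-1)$, and the empty suffix lies in $\PP(n,n)$. The final identity $\sum_{k=0}^{n}|\PP(n,k)|=d^n$ then follows either from the bijection with $\{0,1,\dots,d-1\}^n$ or, if desired, by a short direct check using the geometric-series identity $d+d^2+\cdots+d^{n-2}=(d^{n-1}-d)/(d-1)$.
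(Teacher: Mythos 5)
Your proof is correct and follows essentially the same route as the paper: both parametrize suffixes by exponent vectors in $\{0,1,\dots,d-1\}^n$ (with $0$ marking an absent factor) and count each packet case by case from Definition~\ref{dfn-lp}. You merely spell out the cases $1\le k\le n-1$ and the verification that the packets partition all $d^n$ suffixes, which the paper leaves as ``similar arguments.''
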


\begin{proof}
Assume that $k=0$. We consider the expression
\[s^{(k_1)}_{n1}s^{(k_2)}_{n2}s^{(k_3)}_{n3}\cdots  s^{(k_{n-1})}_{n,n-1} s^{k_n}_{n}.\]
The conditions for $(n,0)$-packet allows $k_1$ to vary from $1$ to $d-1$ and $k_i$ $(2 \le i \le n)$ from $0$ to $d$ except the case $k_2=k_3=\cdots =k_n=0$.
Thus the total number of collections in $\PP(n,0)$ is $(d-1) (d^{n-1}-1)$.

Similar arguments can be applied to the other packets $\PP(n,k)$ for $1 \le k \le n-1$, and it is clear that there is only one collection in $\PP(n,n)$. The total sum can be checked straightforwardly.
\end{proof}

\subsection{Catalan's Triangle}
In this subsection, we will compute the size of a collection in a
given packet, and thereby  classify and enumerate all the fully commutative elements.

As in the case of type $D$ studied in \cite{FL14}, the sizes of collections are given by {\it Catalan triangle numbers} $C(n,k)$ which are defined by
\begin{equation} \label{cat form}
  C(n,k) = \frac{(n+k)!(n-k+1)}{k!(n+1)!}
\end{equation}
for $n \geq 0$ and $0 \leq k \leq n$.
The numbers form the {\em Catalan Triangle} in Table \ref{CT}  to satisfy the rule:
\begin{equation}\label{sum rule}
    C(n,k) = C(n,k-1)+C(n-1,k),
\end{equation}
where all entries outside of the range $0\leq k \leq n$ are considered to be $0$. One also sees that
\begin{equation} \label{ccc}
C_n=C(n, n-1)=C(n,n) ,
\end{equation}
where $C_n$ is the $n^{\textrm{th}}$ Catalan number.

\begin{table}[h]
\[\begin{array}{ccccccccc}
1\\
1 & 1\\
1 & 2 & 2\\
1 & 3 & 5 & 5\\
1 & 4 & 9 & 14 & 14\\
1 & 5 & 14 & 28 & 42 & 42\\
1 & 6 & 20 & 48 & 90 & 132 & 132\\
1 & 7 & 27 & 75 & 165 & 297 & 429 & 429\\
\vdots & \vdots &\vdots &\vdots &\vdots &\vdots &\vdots & \vdots& \ddots\\

\end{array}
\]
\caption{Catalan Triangle} \label{CT}
\end{table}

\begin{lemma} \label{diag}
  For $n\geq 3$, the size of each collection in the packets $\PP(n,n)$ and $\PP(n,n-1)$  is equal to  the Catalan number $C_n$.
\end{lemma}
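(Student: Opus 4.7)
\noindent\textbf{Proof plan for Lemma \ref{diag}.} The strategy is to use Corollary \ref{num} to reduce the count to a single representative collection in each of the two packets, and then to appeal to the classical enumeration of fully commutative elements in the symmetric group. By Corollary \ref{num}, every collection inside $\PP(n,n)$ (respectively $\PP(n,n-1)$) has the same cardinality. Since $\PP(n,n)=\{\coll{n}{[~]}\}$ and $\PP(n,n-1)$ contains $\coll{n}{s^{(t)}_n}$ for any fixed $1\le t<d$, it suffices to show that both $|\coll{n}{[~]}|$ and $|\coll{n}{s^{(t)}_n}|$ equal $C_n$.

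For $\coll{n}{[~]}$, the words are exactly the prefixes $\pw=s_{1i_1}\cdots s_{n-1,i_{n-1}}$ with empty suffix, which are the canonical representatives of the elements of the parabolic subgroup $\langle s_1,\ldots,s_{n-1}\rangle$, a Coxeter group of type $A_{n-1}$. Because the letter $s_n$ never appears in such a word, only the forbidden $A$-type patterns $s_{i+1}s_is_{i+1}$ and $s_is_{i+1}s_i$ ($1\le i\le n-2$) from Proposition \ref{pro-strong} are relevant; consequently the full commutativity of $\pw$ in $G(d,1,n)$ coincides with its full commutativity as an element of the Coxeter group of type $A_{n-1}$. By Proposition \ref{full comm count}, the number of such elements is $C_n$.

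For $\coll{n}{s^{(t)}_n}$, I would construct a bijection $\pw s^t_n\leftrightarrow \pw$ with $\coll{n}{[~]}$ by dropping (respectively appending) the suffix $s^t_n$. The central claim is that $\pw s^t_n$ is fully commutative iff $\pw$ is. Inspection of the canonical form shows that the letter $s_{n-1}$ can occur in $\pw$ only at the start of the final factor $s_{n-1,i_{n-1}}$, so $\pw$ contains at most one occurrence of $s_{n-1}$. Consequently $\pw s^t_n$ also contains at most one $s_{n-1}$, and the forbidden patterns $s_n^{k_1}s_{n-1}s_n^{k_2}s_{n-1}$ and $s_{n-1}s_n^{k_1}s_{n-1}s_n^{k_2}$, which each require two copies of $s_{n-1}$, are automatically excluded from every member of the commutativity class of $\pw s^t_n$. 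For the remaining forbidden patterns $s_{i+1}s_is_{i+1}$ and $s_is_{i+1}s_i$ ($1\le i\le n-2$), I would use the fact that $s_n$ commutes with every $s_j$ for $j\le n-2$ and therefore the $s_n$'s can be slid past the letters of any such pattern; hence such a pattern occurs in some word commutativity-equivalent to $\pw s^t_n$ iff it occurs in some word commutativity-equivalent to $\pw$. Combining both facts with Proposition \ref{pro-strong} gives $\pw s^t_n\in \coll{n}{s^{(t)}_n}$ iff $\pw\in \coll{n}{[~]}$, producing the bijection.

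The main obstacle is making rigorous the second half of that equivalence: I need to track how the forbidden $A$-patterns behave under commutativity moves applied to $\pw s^t_n$. The key point is that the unique $s_{n-1}$ of $\pw$ (if present) cannot be moved past the $s^t_n$ block, so their relative order is rigid, whereas all other letters $s_1,\ldots,s_{n-2}$ slide freely past the $s_n$'s. Once this observation is in hand, any $A$-pattern appearing after commutativity moves in $\pw s^t_n$ can be matched with an $A$-pattern in a corresponding rearrangement of $\pw$, and vice versa. This yields $|\coll{n}{s^{(t)}_n}|=|\coll{n}{[~]}|=C_n$, completing the proof.
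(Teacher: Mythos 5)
Your proposal is correct and follows essentially the same route as the paper: the paper's proof of Lemma \ref{diag} likewise reduces (via Lemma \ref{type size2} and Corollary \ref{num}) to observing that the admissible prefixes for these collections are exactly the fully commutative words of type $A_{n-1}$, counted by $C_n$. Your argument simply spells out in more detail the step the paper leaves implicit, namely that appending or deleting the suffix $s_n^t$ cannot create or destroy a forbidden pattern because a prefix contains at most one occurrence of $s_{n-1}$ and $s_n$ commutes with $s_1,\dots,s_{n-2}$.
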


\begin{proof}
The proof of Lemma \ref{type size2} shows that prefixes in a collection belonging to one of the packets $\PP(n,n)$ and $\PP(n,n-1)$  are exactly fully commutative words of type $A_{n-1}$, the total number of which is well known to be the Catalan number $C_n$.
\end{proof}

The following theorem is an extension of Theorem 2.12 in \cite{FL14} from the case of $D_n$ to $G(d,1,n)$.  The proof is similar to that of type $D_n$, and we refer the reader to \cite{FL14} for more details.

\begin{thm} \label{main D} Assume that $n\ge 3$ and $0\le k \le n$. Then any collection in the packet $\PP(n,k)$ contains exactly $C(n,k)$
  elements.
\end{thm}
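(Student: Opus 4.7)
The plan is to proceed by induction on $n$, starting with a small base case (e.g.\ $n = 3$) which can be verified by direct enumeration as in Table~\ref{G213-packets}. By Corollary~\ref{num}, in each packet $\PP(n,k)$ it suffices to count the prefixes of a single, conveniently chosen collection. The diagonal cases $k = n$ and $k = n-1$ are already covered by Lemma~\ref{diag}, which identifies the relevant prefixes with the $C_n = C(n,n) = C(n,n-1)$ fully commutative elements of type $A_{n-1}$.

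For the remaining packets with $0 \le k \le n-2$, I would select the representative suffix $\sw_k := s_{nk}^{(1)} = [n, n-1, \ldots, k]$ when $k \ge 1$, and a minimal two-factor representative such as $\sw_0 := s_{n1}^{(1)} s_n$ when $k = 0$. The key step is to partition the prefixes $\pw = s_{1 i_1} s_{2 i_2} \cdots s_{n-1, i_{n-1}}$ appearing in $\coll{n}{\sw_k}$ according to whether the rightmost factor $s_{n-1, i_{n-1}}$ is empty (i.e.\ $i_{n-1} = n$) or nonempty (i.e.\ $i_{n-1} \le n-1$), with the aim of realising the Catalan triangle recursion~\eqref{sum rule} bijectively.

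When $i_{n-1} = n$, the prefix contains no occurrence of $s_{n-1}$ or $s_n$, so the leading $s_n$ of $\sw_k$ commutes freely past $\pw$ and the full commutativity condition descends to $G(d,1,n-1)$; this yields a bijection with a collection in the packet $\PP(n-1, k)$ of $G(d,1,n-1)$, contributing $C(n-1,k)$ by induction. When $i_{n-1} \le n-1$, I would exhibit a bijection with a collection in $\PP(n, k-1)$ via the surgery that shortens the last factor of $\pw$ from $s_{n-1, i_{n-1}}$ to $s_{n-1, i_{n-1}+1}$ while lengthening the suffix by an appropriate letter; this contributes $C(n, k-1)$ by induction. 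The identity~\eqref{sum rule} then gives the required total $C(n,k)$.

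The main obstacle is verifying that these two maps genuinely preserve full commutativity. By Proposition~\ref{pro-strong} together with Lemma~\ref{lem-kr}, this reduces to examining the $\{p, p+1\}$-sequences affected by the surgery, namely those for $p$ in a small neighbourhood of $k$ and of $n-1$. Since the chosen representative $\sw_k$ involves only the exponent $k_1 = 1$ on $s_n$, the higher-order forbidden pattern~\eqref{rlis-1} imposes no new constraint beyond the type-$D$ case, so the local check reduces essentially to that of~\cite{FL14}. The $(n,0)$-case requires a minor separate argument owing to the two-factor representative $\sw_0$, but the splitting at $i_{n-1}$ carries over unchanged.
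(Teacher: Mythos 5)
Your high-level strategy coincides with the paper's: induct so as to realize the recursion $C(n,k)=C(n,k-1)+C(n-1,k)$, reduce to one collection per packet via Corollary \ref{num}, dispose of $k=n,n-1$ by Lemma \ref{diag}, and treat $k=0$ separately. The gap is in the central combinatorial step: your partition of the prefixes of $\coll{n}{\sw_k}$ is the wrong one. You split according to whether the factor $s_{n-1,i_{n-1}}$ is empty ($i_{n-1}=n$) or nonempty ($i_{n-1}\le n-1$); the paper splits according to whether the prefix \emph{ends with the letter} $n-1$ (i.e.\ $i_{n-1}=n-1$) versus everything else (which lumps $i_{n-1}=n$ together with $i_{n-1}\le n-2$). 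These partitions differ, and yours does not produce classes of sizes $C(n-1,k)$ and $C(n,k-1)$. Concretely, in $G(2,1,3)$ with $k=1$ the collection $\coll{3}{[3,2,1]}=\{[3,2,1],[2,3,2,1],[1,2,3,2,1]\}$ has prefixes $[~]$, $[2]$, $[1,2]$; your first class ($i_2=3$) is $\{[~]\}$ of size $1$ while $C(2,1)=2$, and your second class has size $2$ while $C(3,0)=1$. In general, for $1\le k\le n-2$ a prefix with empty $s_{n-1}$-factor that is compatible with the suffix $[n,n-1,\dots,k]$ is forced, by the forbidden pattern $s_{p}s_{p+1}s_{p}$ cascading down from $p=n-2$, to contain no letter $\ge k$ at all; so your first class has exactly $C_k$ elements, not $C(n-1,k)$.

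A second, independent problem is the phrase ``the full commutativity condition descends to $G(d,1,n-1)$.'' The subgroup of $G(d,1,n)$ generated by $s_1,\dots,s_{n-1}$ is of type $A_{n-1}$, with the length-three braid relation between $s_{n-2}$ and $s_{n-1}$, whereas in $G(d,1,n-1)$ the top generator has order $d$ and satisfies the length-four relation \eqref{eq:l-2} with its neighbour; the two full-commutativity conditions genuinely differ (for instance $[1,2,1]$ is fully commutative in $G(2,1,2)$, but $[1,3,2,1]$ is not fully commutative in $G(2,1,3)$). The paper circumvents this by comparing $\coll{n}{[n,\dots,k]}$ with $\coll{n-1}{[n-1,\dots,k]}$ via the map $\pw\,[n-1,n-2,\dots,k]\mapsto \pw\,s_{n-1}\,[n,n-1,\dots,k]$ (inserting an extra $s_{n-1}$ in front of the suffix, so that the target prefix ends in $n-1$), and with $\coll{n}{[n,\dots,k,n]}\in\PP(n,k-1)$ by keeping the prefix and appending $s_n$ to the suffix on the complementary class. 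To repair your argument you would need to adopt this partition and these two maps, and then carry out the full-commutativity verification; as written, the claimed bijections do not exist.
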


\begin{proof}
We have already proved the cases when $k=n$ and $n-1$ in Lemma~\ref{diag}.
Now consider the packet $\PP(n,0)$, which consists of the collections labeled by the suffixes
$$s^{(t_1)}_{n1}s^{(t_2)}_{nj_2}s^{(t_3)}_{nj_3}s^{(t_4)}_{nj_4}\cdots s^{(t_\ell)}_{nj_\ell} \  \ (\ell \ge 2).$$
By Lemma \ref{type2}, it is enough to consider the collection $\bf{c}$ labeled by $s^{(t_1)}_{n1}s_{n}$.
If a word $\bw \in \bf{c}$ contains a non-empty prefix
$\pw$ ending with the letter $r$ for $1\leq r \leq n-1$, then the
word $\bw$ contains, as a right factor, $[ r, n, n-1,\dots, r+1, r,  \dots, 1,
n]$  which contradicts full commutativity. Thus
the collection $\bf{c}$, and hence every collection in $\PP(n,0)$,
contains only the suffix itself. Thus we have $C(n,0)=1$.

For the other cases, we will combinatorially (or bijectively) realize the identity \eqref{sum rule}.
One can see that any collection in the packet
$\PP(3,k)$ contains exactly $C(3,k)$ elements for $0 \le k \le 3$. The case $G(2,1,3)$ is given in Table \ref{G213-packets} and the case $G(3,1,3)$ in the second example of Section \ref{sec:ex}.  Thus the assertion of
the theorem is true for $n=3$, and we will proceed by induction with  the base cases $k=0$ or $n=3$.  Further, by Corollary \ref{num}, it is  enough to consider a single collection
in each of the packets.

Assume that $n \ge 4$  and $1 \leq k \leq n-2$.  We define
\begin{align*} & \bw_1= s_{nk}s_{n}=[n, n-1, \dots, k, n],  \quad \bw_2=s_{nk}=[n, n-1, \dots , k], \\
&  \bw_3= [n-1, n-2, \dots , k] .
\end{align*}
Then $\coll{n}{\bw_1} \in \PP(n, k-1)$, $\coll{n}{\bw_2} \in \PP(n, k)$ and $\coll{n-1}{\bw_3} \in \PP(n-1, k)$.
We will give an explicit bijection from  $\coll{n}{\bw_1}\cup\coll{n-1}{\bw_3}$ to $\coll{n}{\bw_2}$.

Define a map $\varphi_1: \coll{n}{\bw_1} \to \coll{n}{\bw_2}$ by
$$
    \varphi_1(\pw \bw_1) = \varphi(\pw [n, n-1, \dots , k, n] )= \pw \bw_2 = \pw [n, n-1, \dots , k] ,
$$
and another map $\varphi_2: \coll{n-1}{\bw_3} \to  \coll{n}{\bw_2}$ by
$$\varphi_2(\pw\bw_3) = \pw s_{n-1} \bw_2= \pw s_{n-1} [ n, n-1, n-2, \dots, k].$$
Then it can be checked that the maps $\varphi_1$ and $\varphi_2$ are well defined ant that the images are disjoint.
Finally, combining $\varphi_1$ and $\varphi_2$, we define a map $\varphi:
\coll{n}{\bw_1}\cup\coll{n-1}{\bw_3} \to \coll{n}{\bw_2}$, i.e. the restriction of
$\varphi$  to $\coll{n}{\bw_1}$ is $\varphi_1$ and the
restriction of $\varphi$ to $\coll{n-1}{\bw_3}$ is
$\varphi_2$.

Conversely, define the  map $\rho: \coll{n}{\bw_2} \to \coll{n}{\bw_1}\cup\coll{n-1}{\bw_3}$ to be given by the rule:
$$
   \rho(\pw\bw_2) = \rho(\pw [n, n-1, \dots , k]) =
   \left\{\begin{array}{ll}
    \pw [ n-2, \dots, k] \in \coll{n-1}{\bw_3},  & \textrm{ if } \pw \textrm{ ends with } n-1, \\
   \pw \bw_1 \in \coll{n}{\bw_1}, & \textrm{ otherwise.}
   \end{array}\right.
$$
One can check that the map $\rho$ is well defined.

Now one can see that $\rho$ is the two-sided inverse of $\varphi$.
In particular, if we restrict $\rho$ to the words whose prefixes end
with $n-1$, then we obtain the inverse for $\varphi_2$, while if we
restrict to the prefixes not ending in $n-1$, we have the inverse
for $\varphi_1$.

This establishes, for each $k$,
\[ |\coll{n}{\bw_2}| = |\coll{n}{\bw_1}|+|\coll{n-1}{\bw_3}| ,\]
which is the same identity as \eqref{sum rule} inductively.
This proves that $|\coll{n}{\bw_2}|=C(n,k)$ as desired.
\end{proof}

Let us recall the Catalan triangle polynomial introduced
in \cite[Definition 2.11]{LO16C}:

\begin{dfn} For $0 \le k \le n$, we define the {\em Catalan triangle polynomial} $\F_{n,k}(x)$ by
\begin{align} \label{eq: n,k Cat tr pol2}
\F_{n,k}(x) & = \sum_{s=0}^{k} C(n,s)x^{k-s}.
\end{align}
\end{dfn}

We need some special values of the polynomial $\F_{n,k}(x)$.

\begin{lemma}\cite[Corollary 2.9]{LO16C}
For $0 \le k <n$, we have
\begin{equation} \label{eq-fnk2}
\F_{n,k}(2) = \matr{n+1+k}{k}.
\end{equation}
\end{lemma}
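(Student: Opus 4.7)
The plan is to proceed by induction on $k$, exploiting the obvious recursion in the definition of $\F_{n,k}(x)$ together with a standard rewriting of $C(n,k)$ as a difference of ordinary binomial coefficients. The base case $k=0$ is immediate from the definition: $\F_{n,0}(2)=C(n,0)=1=\matr{n+1}{0}$.

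For the inductive step, separate off the $s=k$ term and factor a $2$ out of the remaining sum:
\[
\F_{n,k}(2) \;=\; \sum_{s=0}^{k-1} C(n,s)\,2^{k-s} \;+\; C(n,k) \;=\; 2\,\F_{n,k-1}(2) \;+\; C(n,k).
\]
Using the explicit formula \eqref{cat form}, one verifies the identity
\[
C(n,k) \;=\; \frac{(n+k)!}{k!(n+1)!}\bigl[(n+1)-k\bigr] \;=\; \matr{n+k}{k} - \matr{n+k}{k-1},
\]
so by the induction hypothesis $\F_{n,k-1}(2)=\matr{n+k}{k-1}$ we obtain
\[
\F_{n,k}(2) \;=\; 2\matr{n+k}{k-1} + \matr{n+k}{k} - \matr{n+k}{k-1} \;=\; \matr{n+k}{k-1}+\matr{n+k}{k},
\]
and the right-hand side equals $\matr{n+1+k}{k}$ by Pascal's rule. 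This completes the induction and gives \eqref{eq-fnk2}.

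There is no real obstacle here; the only slightly non-routine step is recognizing that $C(n,k)$ can be written as the binomial-coefficient difference $\matr{n+k}{k}-\matr{n+k}{k-1}$, after which Pascal's rule finishes the argument. An alternative (and essentially equivalent) approach would be to verify directly that both sides of \eqref{eq-fnk2} satisfy the same first-order recursion in $k$ with the same initial value at $k=0$; this is really just a repackaging of the induction above.
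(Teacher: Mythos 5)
Your proof is correct. Note that the paper itself offers no proof of this lemma --- it is quoted verbatim from \cite[Corollary 2.9]{LO16C} --- so there is nothing internal to compare against; your argument simply supplies a proof the paper omits. The three ingredients all check out: the recursion $\F_{n,k}(2)=2\F_{n,k-1}(2)+C(n,k)$ follows immediately from the defining sum; the identity $C(n,k)=\matr{n+k}{k}-\matr{n+k}{k-1}$ is a direct computation from \eqref{cat form}, since $\matr{n+k}{k}-\matr{n+k}{k-1}=\frac{(n+k)!}{k!\,(n+1)!}\bigl[(n+1)-k\bigr]$; and Pascal's rule closes the induction. The hypothesis $k<n$ is not actually needed anywhere in your argument beyond keeping all $C(n,s)$ within the range where \eqref{cat form} is the intended definition, which is consistent with the stated range of the lemma.
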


In light of the above lemma, the numbers $\F_{n,k}(d)$, $d >2$, can be considered as a certain generalization of binomial coefficients. Interestingly, we need  $\F_{n,n-2}(d)$ to write a formula for the number of fully commutative elements in $G(d,1,n)$ in the following corollary. This is also the case for $G(d,r,n)$. See Corollaries \ref{cor-dd} and \ref{cor-dr}.

\begin{cor} \label{cor-end}
For $n\ge 3$, the number of fully commutative elements
of $G(d,1,n)$ is equal to
\begin{equation} \label{eqn-end} \sum_{k=0}^n C(n,k) \left | \PP(n, k) \right | = d(d-1)\F_{n,n-2}(d) +(2d-1)C_n -(d-1).\end{equation}
\end{cor}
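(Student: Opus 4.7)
The plan is to evaluate the sum $\sum_{k=0}^n C(n,k) |\PP(n,k)|$ directly, since Theorem \ref{main D} guarantees that every collection in $\PP(n,k)$ has exactly $C(n,k)$ fully commutative elements, and since the set of fully commutative elements of $G(d,1,n)$ partitions (via suffixes) into these collections grouped into packets. So the count of fully commutative elements is exactly the displayed sum, and what remains is a purely algebraic manipulation to rewrite it in the stated closed form.

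First I substitute the packet sizes from Proposition \ref{prop-d-1}, together with $C(n,0)=1$ and $C(n,n-1)=C(n,n)=C_n$, to obtain
\begin{align*}
\sum_{k=0}^n C(n,k)|\PP(n,k)| &= (d-1)(d^{n-1}-1)\\
&\quad + (d-1)\sum_{k=1}^{n-2} C(n,k)\, d^{n-k-1} + 2(d-1)C_n + C_n.
\end{align*}
The key observation is that the middle sum is visibly close to a value of the Catalan triangle polynomial. Indeed, from the definition \eqref{eq: n,k Cat tr pol2},
\begin{equation*}
d\,\F_{n,n-2}(d) \;=\; \sum_{s=0}^{n-2} C(n,s)\, d^{n-1-s} \;=\; d^{n-1} + \sum_{k=1}^{n-2} C(n,k)\, d^{n-k-1},
\end{equation*}
using $C(n,0)=1$ to peel off the $s=0$ term. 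Hence
\begin{equation*}
\sum_{k=1}^{n-2} C(n,k)\, d^{n-k-1} \;=\; d\,\F_{n,n-2}(d) - d^{n-1}.
\end{equation*}

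Plugging this back into the preceding expression yields
\begin{align*}
\sum_{k=0}^n C(n,k)|\PP(n,k)| &= (d-1)d^{n-1}-(d-1) + (d-1)\bigl(d\,\F_{n,n-2}(d) - d^{n-1}\bigr)\\
&\quad + (2d-1)C_n,
\end{align*}
and the $(d-1)d^{n-1}$ terms cancel, giving the claimed
\begin{equation*}
d(d-1)\F_{n,n-2}(d) + (2d-1)C_n - (d-1).
\end{equation*}
There is no real obstacle here; once Theorem \ref{main D} and Proposition \ref{prop-d-1} are in place, this is a short bookkeeping step, the only subtlety being to recognize $\F_{n,n-2}(d)$ hiding in the geometric-style sum $\sum C(n,k) d^{n-k-1}$.
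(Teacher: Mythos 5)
Your proposal is correct and follows exactly the route the paper intends: the paper's own proof of this corollary is the one-line remark that it ``follows from Proposition \ref{prop-d-1} and the definitions,'' and your computation simply makes explicit the substitution of the packet sizes, the identification of $\sum_{k=1}^{n-2} C(n,k)d^{n-k-1}$ with $d\,\F_{n,n-2}(d)-d^{n-1}$, and the cancellation of the $(d-1)d^{n-1}$ terms. The algebra checks out, so this is a faithful (and usefully more detailed) version of the paper's argument.
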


In particular, when $d=2$, we recover the result of \cite{Stem96} on $B_n$-type using \eqref{eq-fnk2}:
$$\sum_{k=0}^n C(n,k) \left | \PP(n, k) \right | = 2\matr{2n-1}{n-2}+ 3C_n -1  = (n+2) C_n-1.  $$

\begin{proof}
The assertion follows from Proposition \ref{prop-d-1} and the definitions.
\end{proof}

\section{Packets in $G(d,r,n)$} \label{sec-gddn}

In this section, we assume that $ 1\le r \le d$ and $r|d$. Thus the family of groups $G(d,r,n)$ includes the case $G(d,d,n)$. The results will be presented for $G(d,d,n)$ first for simplicity, and then will be generalized to the case $G(d,r,n)$.

\medskip

 Recall that we consider $G(d,r,n)$ as a subgroups of $G(d,1,n)$ through the embedding $\tau$, and that  Lemma \ref{lem-tilde-r} describes the elements of $G(d,r,n)$. We define the {\em packets of $G(d,r,n)$} to be those of $G(d,1,n)$  which are contained in $G(d,r,n)$.

\begin{prop} \label{prop-d-d}
The size of the
packet $\PP(n,k)$ of $G(d,d,n)$ is
$$ \left | \PP(n,k) \right| =
   \left\{\begin{array}{cl}
   d^{n-k-2}(d-1) & \textrm{ if } 0\le k \le n-2, \\
   1 & \textrm{ if } k= n. \\
   \end{array}
   \right.
$$
Hence we have $ \sum_{k=0}^n  \left | \PP(n,k) \right|  =  d^{n-1}$.
\end{prop}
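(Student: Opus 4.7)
The plan is to invoke Lemma \ref{lem-tilde}, which identifies $G(d,d,n)$ with the subset of canonical words in $\mathcal W(d,1,n)$ satisfying $k_1 + \cdots + k_\ell \equiv 0 \pmod{d}$. Since the packets of $G(d,d,n)$ are by definition those packets of $G(d,1,n)$ that lie inside $G(d,d,n)$, $|\PP(n,k)|$ counts the suffixes of the shapes listed in Definition \ref{dfn-lp} whose exponent sum is a multiple of $d$. I would proceed case by case in $k$.

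For $k=n$, the empty suffix vacuously satisfies the congruence, giving $1$. For $k=n-1$, the candidate suffixes are $s^{(t)}_n$ and $s^{(t)}_{n,n-1}$ with $1 \le t \le d-1$, each contributing a single exponent $t$; none is divisible by $d$, so this packet is empty, which is why it is omitted from the statement.

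For $0 \le k \le n-2$ the heart of the proof is a reparametrization. When $k \ge 1$, the $\ell=1$ suffix $s^{(t)}_{nk}$ contributes nothing, since $t \equiv 0 \pmod d$ is impossible for $1 \le t \le d-1$. The remaining $\ell \ge 2$ suffixes $s^{(t_1)}_{n,k+1}s^{(t_2)}_{nj_2}\cdots s^{(t_\ell)}_{nj_\ell}$ are in bijection, by letting $t_j=0$ encode omission of the factor indexed by $j$, with tuples
\[
(t_1, t_{k+2}, \ldots, t_n) \in \{1,\ldots,d-1\} \times \{0,1,\ldots,d-1\}^{n-k-1}
\]
subject to $t_1 + t_{k+2} + \cdots + t_n \equiv 0 \pmod d$. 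For any fixed tail with residue $S := t_{k+2}+\cdots+t_n \pmod d$, there is a unique valid $t_1 \in \{1,\ldots,d-1\}$ when $S \not\equiv 0$ and none when $S \equiv 0$. Since residues are uniformly distributed on $\{0,\ldots,d-1\}^{n-k-1}$ by a standard bijection argument on the last coordinate, exactly $d^{n-k-1}-d^{n-k-2} = d^{n-k-2}(d-1)$ tails give a valid $t_1$. The $\ell \ge 2$ requirement is automatic: a vanishing tail would force $t_1 \equiv 0$, contradicting $t_1 \in \{1,\ldots,d-1\}$. The case $k=0$ is identical, with no $\ell=1$ suffix to discard in the first place, and yields $d^{n-2}(d-1)$, matching the formula.

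The total follows from a finite geometric series: $\sum_{k=0}^{n-2} d^{n-k-2}(d-1) + 1 = d^{n-1}$, consistent with $|\mathcal W(d,d,n)| = n!\cdot d^{n-1}$. There is no genuine obstacle here; the proposition reduces to the equidistribution-of-residues count once the encoding $t_j = 0 \leftrightarrow \text{omit } s^{(t_j)}_{nj}$ is set up.
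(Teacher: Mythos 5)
Your proposal is correct and follows the same route as the paper: apply Lemma \ref{lem-tilde} to see which suffixes from Definition \ref{dfn-lp} satisfy $k_1+\cdots+k_\ell \equiv 0 \pmod d$, observe that the $(n,n-1)$-packet and the $\ell=1$ suffixes $s^{(t)}_{nk}$ are excluded, and count the rest. The paper simply asserts the count $d^{n-k-2}(d-1)$; your equidistribution-of-residues argument (unique $t_1$ for each tail with nonzero residue sum, with the $\ell\ge 2$ condition automatic) is exactly the justification it leaves implicit.
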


\begin{proof}
We use Lemma \ref{lem-tilde} to determine which packets of $G(d,1,n)$ in Definition \ref{dfn-lp} are contained in $G(d,d,n)$.
Clearly, the $(n,n-1)$-packet cannot occur in $G(d,d,n)$, and there is still only one collection in the $(n,n)$-packet.
For $0 \le k \le n-2$,
the suffixes $s^{(t)}_{nk}$ cannot appear in $G(d,d,n)$ and the number of suffixes of the form
$s^{(t_1)}_{n\, k+1}s^{(t_2)}_{nj_2}s^{(t_3)}_{nj_3}s^{(t_4)}_{nj_4}\cdots s^{(t_\ell)}_{nj_\ell}$ $(\ell \ge 2)$ that appear in $G(d,d,n)$
is $d^{n-k-2}(d-1)$.
\end{proof}

More generally, we have the following.

\begin{prop} \label{prop-d-r}
The size of the
packet $\PP(n,k)$ of $G(d,r,n)$ is
$$ \left | \PP(n,k) \right| =
   \left\{\begin{array}{cl}
   \dfrac{d^{n-1}}{r} (d-1) - \left(\dfrac{d}{r}-1 \right)  & \textrm{ if } k=0, \\
   \dfrac{d^{n-k-1}}{r} (d-1) & \textrm{ if } 1\le k \le n-2, \\
   2 \left(\dfrac{d}{r}-1 \right) & \textrm{ if } k= n-1, \\
   1 & \textrm{ if } k= n. \\
   \end{array}
   \right.
$$
Hence we have $ \sum_{k=0}^n  \left | \PP(n,k) \right|  = d^n/r$.
\end{prop}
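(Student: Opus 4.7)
The plan is to mirror the proof of Proposition \ref{prop-d-d}, now with $r$ in place of $d$ as the modulus. By definition and Lemma \ref{lem-tilde-r}, the $(n,k)$-packet of $G(d,r,n)$ consists of those collections from the $(n,k)$-packet of $G(d,1,n)$ whose labeling suffix $s^{(k_1)}_{nj_1}\cdots s^{(k_\ell)}_{nj_\ell}$ satisfies $k_1+\cdots+k_\ell\equiv 0\pmod r$, so the task reduces to counting labelings subject to this single congruence.

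The essential input will be a uniform counting lemma: since $r\mid d$, the set $\{0,1,\dots,d-1\}$ contains exactly $d/r$ representatives in each residue class modulo $r$. Consequently, for any $m\ge 0$ the number of tuples $(a_1,\dots,a_m)\in\{0,\dots,d-1\}^m$ with $\sum a_i\equiv 0\pmod r$ equals $d^m/r$, independent of the target residue. Encoding a suffix in the $(n,k)$-packet by its exponent vector $(k_{k+1},\dots,k_n)\in\{0,\dots,d-1\}^{n-k}$ (reading $k_i=0$ as ``$s^{(k_i)}_{ni}$ absent'') converts the packet counts of Proposition \ref{prop-d-1} into their $G(d,r,n)$-analogues by a short inclusion-exclusion.

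I would then run through the four cases. The cases $k=n$ and $k=n-1$ are immediate: the empty suffix trivially contributes $1$, and for $k=n-1$ one counts the $t\in\{1,\dots,d-1\}$ with $t\equiv 0\pmod r$, namely $d/r-1$ values, doubled across the two suffix shapes $s_n^t$ and $s^{(t)}_{n,n-1}$. For $1\le k\le n-2$ the $(n,k)$-packet splits into short-form suffixes $s^{(t)}_{nk}$, contributing $d/r-1$ after imposing $t\equiv 0\pmod r$, and long-form suffixes, which correspond to tuples with $k_{k+1}\ne 0$ and some $k_j\ne 0$ for $j\ge k+2$; the counting lemma yields $d^{n-k-1}(d-1)/r$ tuples with $k_{k+1}\ne 0$ and $\sum\equiv 0$, from which I subtract the $d/r-1$ degenerate ones supported entirely at position $k+1$. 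The two contributions add to $d^{n-k-1}(d-1)/r$. The case $k=0$ is handled identically except that the short form is absent, leaving $d^{n-1}(d-1)/r-(d/r-1)$.

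The total $\sum_k|\PP(n,k)|=d^n/r$ then falls out by collapsing the geometric series $(d-1)/r\cdot(d+d^2+\cdots+d^{n-1})+d/r$. I do not anticipate any serious obstacle; the only delicate bookkeeping point is to subtract off those long-form patterns that degenerate to a single $s^{(t)}_{n,k+1}$, so as not to double-count them against the short-form entries of the partition, and this is exactly what the $-(d/r-1)$ correction accomplishes in each non-trivial case.
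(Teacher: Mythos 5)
Your proposal is correct and follows essentially the same route as the paper: restrict the packets of $G(d,1,n)$ via the congruence $k_1+\cdots+k_\ell\equiv 0\pmod r$ from Lemma \ref{lem-tilde-r}, count using the fact that each residue class mod $r$ meets $\{0,\dots,d-1\}$ in exactly $d/r$ points, and subtract the degenerate single-factor suffixes. The paper's proof is terser (it only writes out the $k=0$ and $k=n-1$ cases and says the rest is similar), and your explicit short-form/long-form bookkeeping for $1\le k\le n-2$ is exactly the omitted verification.
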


\begin{proof}
We use Lemma \ref{lem-tilde-r} to determine which packets of $G(d,1,n)$ in Definition \ref{dfn-lp} are contained in $G(d,r,n)$.
Clearly, there is still only one collection in the $(n,n)$-packet. For the $(n,n-1)$-packet, each of $s^{(t)}_n$ and $s^{(t)}_{n,n-1}$ has $\frac d r-1$ possibilities to satisfy the conditions $t \equiv 0\  (\mathrm{mod}\ r)$ and $1\le t <d$.

As for the $(n,0)$-packet, we consider the expression
\[s^{(k_1)}_{n1}s^{(k_2)}_{n2}s^{(k_3)}_{n3}\cdots  s^{(k_{n-1})}_{n,n-1} s^{k_n}_{n}.\]
Then $k_1$ varies from $1$ to $d-1$ and $k_i$ $(2 \le i \le n-1)$ from $0$ to $d$ and then $k_n$ has $d/r$ choices, except the cases that $k_1 \equiv 0$ $(\mathrm{mod}\ r)$ and $k_2=k_3=\cdots =k_n=0$.
Thus the total number of collections in $\PP(n,0)$ is $\dfrac{d^{n-1}}{r} (d-1) - \left(\dfrac{d}{r}-1 \right)$.

The sizes of $(n,k)$-packets for $1 \le k \le n-2$ can be checked similarly.
\end{proof}

\begin{cor} \label{cor-dd}
The number of fully commutative elements in the group $G(d,d,n)$ is equal to
\begin{equation} \label{eqn-end2} \sum_{k=0}^n C(n,k) \left | \PP(n, k) \right | = (d-1)\F_{n,n-2}(d)+ C_n.\end{equation}
\end{cor}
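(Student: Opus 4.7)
The plan is to verify the identity in \eqref{eqn-end2} by a direct computation, assembling the ingredients already established earlier in the paper. The number of fully commutative elements in $G(d,d,n)$ is, by definition of the packet decomposition, the sum $\sum_{k=0}^n C(n,k)|\PP(n,k)|$, since Theorem~\ref{main D} tells us that every collection in $\PP(n,k)$ has exactly $C(n,k)$ elements. So the corollary is really an arithmetic identity, and my proof will just evaluate this sum.

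First I would substitute the formula for $|\PP(n,k)|$ from Proposition~\ref{prop-d-r} specialized to $G(d,d,n)$, which is Proposition~\ref{prop-d-d}. The key observation is that for $G(d,d,n)$, only the packets $\PP(n,0),\PP(n,1),\ldots,\PP(n,n-2)$ and $\PP(n,n)$ contribute, since $\PP(n,n-1)$ is empty in this case (this is visible from the condition $k_1+\cdots+k_\ell\equiv 0\pmod{d}$ in Lemma~\ref{lem-tilde}, which forbids a lone suffix of the form $s_n^t$ or $s_{n,n-1}^{(t)}$ with $1\le t<d$). Thus
\begin{align*}
\sum_{k=0}^n C(n,k)|\PP(n,k)| &= \sum_{k=0}^{n-2} C(n,k)\,d^{n-k-2}(d-1) + C(n,n)\cdot 1 \\
&= (d-1)\sum_{k=0}^{n-2} C(n,k)\,d^{(n-2)-k} + C_n,
\end{align*}
where I used $C(n,n) = C_n$ from \eqref{ccc}.

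Finally, I would recognize the remaining sum as a value of the Catalan triangle polynomial: by the definition \eqref{eq: n,k Cat tr pol2},
\[
\F_{n,n-2}(d) = \sum_{s=0}^{n-2} C(n,s)\,d^{(n-2)-s},
\]
so the expression simplifies to $(d-1)\F_{n,n-2}(d)+C_n$, which is the desired formula. There is no real obstacle here beyond careful bookkeeping; the entire content is already in Proposition~\ref{prop-d-d}, Theorem~\ref{main D}, and the definition of $\F_{n,k}(x)$, and the proof is a one-line substitution followed by matching the sum against the definition of $\F_{n,n-2}(d)$.
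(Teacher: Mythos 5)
Your proof is correct and follows the same route the paper intends: the paper gives no separate argument for this corollary beyond combining Theorem~\ref{main D}, Proposition~\ref{prop-d-d}, and the definition of $\F_{n,n-2}(d)$, which is exactly the substitution you carry out. Your observation that $\PP(n,n-1)$ is empty for $G(d,d,n)$ (forced by the congruence $k_1+\cdots+k_\ell\equiv 0 \pmod d$) correctly accounts for the missing $C_n\cdot 2(d-1)$ term that appears in the $G(d,1,n)$ count.
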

In particular, when $d=2$, we obtain from \eqref{eq-fnk2}
\begin{equation} \label{eq:above} \sum_{k=0}^n C(n,k) \left | \PP(n, k) \right | = \matr{2n-1}{n-2}  + C_n  = \dfrac{n-1}{2} C_n + C_n  = \dfrac{n+1}{2} C_n.
\end{equation}

\begin{remark}
The number $\frac{n+1}{2} C_n$ in \eqref{eq:above}  is different from the number $\frac{n+3}{2} C_{n}-1$  of fully commutative elements of type $D_n$ considered in \cite{Stem98,FL14} without embedding $\iota$. Thus our definition of fully commutative elements of $G(2,2,n)$ is not equivalent to that of $D_n$ in \cite{Stem98,FL14}. See the first example in Section \ref{sec:ex} for more details.
\end{remark}

\begin{remark}

Let $c(x)=\dfrac {1 -\sqrt{1-4x}}{2x}$ be the generating function of the Catalan numbers $C_n$. The generating function of the numbers of fully commutative elements in $G(d,d,n)$ is given by
\[ \frac{1-(d-1)\, x \, c(x)}{1-d\, x \, c(x)} .\]
\end{remark}

\begin{cor} \label{cor-dr}
The number of fully commutative elements in the group $G(d,r,n)$ is equal to
\begin{equation} \label{eqn-endr}
\sum_{k=0}^n C(n,k) \left | \PP(n, k) \right | = \dfrac{d(d-1)}{r} \F_{n,n-2}(d) +\left( \dfrac{2d}{r} -1 \right)C_n - \left (\dfrac d r -1 \right ).
\end{equation}
\end{cor}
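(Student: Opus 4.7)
The plan is to follow exactly the same recipe used in Corollary \ref{cor-end}, substituting the packet sizes of $G(d,r,n)$ given by Proposition \ref{prop-d-r} in place of those for $G(d,1,n)$. Since $G(d,r,n)$ is identified with the image of $\tau$ in $G(d,1,n)$, the packets of $G(d,r,n)$ are just those packets of $G(d,1,n)$ contained in the image, and each collection inside such a packet is a collection of $G(d,1,n)$. Therefore Theorem \ref{main D} applies verbatim: every collection belonging to $\PP(n,k)$ (viewed inside $G(d,r,n)$) contains exactly $C(n,k)$ fully commutative elements. Consequently the number of fully commutative elements in $G(d,r,n)$ equals $\sum_{k=0}^n C(n,k)\,|\PP(n,k)|$ with the packet sizes given by Proposition \ref{prop-d-r}.

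The only work remaining is the bookkeeping. Using $C(n,0)=1$ and $C(n,n-1)=C(n,n)=C_n$, I would split the sum as
\[
\sum_{k=0}^n C(n,k)\,|\PP(n,k)| = \frac{d-1}{r}\sum_{k=0}^{n-2} C(n,k)\,d^{n-k-1} \;-\; \Bigl(\tfrac{d}{r}-1\Bigr) \;+\; 2C_n\Bigl(\tfrac{d}{r}-1\Bigr) \;+\; C_n,
\]
after observing that the $k=0$ summand $C(n,0)\cdot\bigl[\tfrac{d^{n-1}}{r}(d-1)-(\tfrac dr-1)\bigr]$ has a first piece that merges with the $1\le k\le n-2$ terms to give the single sum above.

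Next I would recognize the telltale sum. By the definition \eqref{eq: n,k Cat tr pol2},
\[
\F_{n,n-2}(d) = \sum_{s=0}^{n-2} C(n,s)\, d^{n-2-s},
\]
so $\sum_{k=0}^{n-2} C(n,k)\,d^{n-k-1} = d\,\F_{n,n-2}(d)$. Substituting this into the displayed sum yields
\[
\frac{d(d-1)}{r}\,\F_{n,n-2}(d) \;+\; \Bigl(\tfrac{2d}{r}-1\Bigr)C_n \;-\; \Bigl(\tfrac{d}{r}-1\Bigr),
\]
which is exactly \eqref{eqn-endr}.

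There is essentially no obstacle: the entire statement is an elementary consequence of Proposition \ref{prop-d-r}, Theorem \ref{main D}, and the definition of $\F_{n,k}$. The only place to be careful is the arithmetic step where the $k=0$ contribution of the $d^{n-1}(d-1)/r$ term cancels with the $-d^{n-1}$ piece produced when one rewrites the partial sum $\sum_{k=1}^{n-2}$ as the full sum $\sum_{k=0}^{n-2}$ minus the $k=0$ term; this cancellation is what makes the closed form come out clean. As a sanity check one can set $r=1$ and recover \eqref{eqn-end}, and one can set $r=d$ and recover \eqref{eqn-end2}.
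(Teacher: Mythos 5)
Your proposal is correct and follows exactly the route the paper intends: the paper states this corollary without a separate argument, and its proof of the analogous Corollary \ref{cor-end} is precisely ``follows from Proposition \ref{prop-d-1} and the definitions,'' which here becomes Proposition \ref{prop-d-r}, Theorem \ref{main D}, and the identity $\sum_{k=0}^{n-2} C(n,k)d^{n-k-1}=d\,\F_{n,n-2}(d)$. Your bookkeeping, including the merging of the $k=0$ packet's $\frac{d^{n-1}}{r}(d-1)$ term into the full sum and the consistency checks at $r=1$ and $r=d$, is accurate.
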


\section{Examples} \label{sec:ex}

(1) The group $G(2,2,n)$ is isomorphic to the Coxeter group of type $D_n$ which has its own definition of fully commutative elements without invoking the embedding $\iota$ into $G(2,1,n)$.
For example, the element $\ts_3 \ts_4 \ts_2 \ts_1 \in G(2,2,4)$ is fully commutative before being embedded into $G(2,1,4)$, but we have
\[ \iota ( \ts_3 \ts_4 \ts_2 \ts_1 ) = s_3 s_4 s_3 s_4 s_2 s_1 , \]
which is not fully commutative in $G(2,1,4)$. The number of fully commutative elements of $D_4$ (without embedding) is 48, whereas the number of fully commutative elements of $G(2,2,4)$ (after being embedded) is 35.

(2) The group $G(3,1,3)$ has 59 fully commutative elements. We list them below in packets and collections.

\begin{center}
\begin{tabular}{|c|c|c|}
 \hline
Packets   & \text{ Collections} & $C(3,k)$\\
   \hline
$\mathcal P(3,0)$ &
$ \begin{array}{llll} \mathbf{c}_{[3,2,1,3]}, & \mathbf{c}_{[3,2,1,3^2]}, & \mathbf{c}_{[3,2,1,3,2]}, & \mathbf{c}_{[3,2,1,3,2,3]}, \\
\mathbf{c}_{[3,2,1,3,2,3^2]},  & \mathbf{c}_{[3,2,1,3^2,2]},  & \mathbf{c}_{[3,2,1,3^2,2,3]}, &
\mathbf{c}_{[3,2,1,3^2,2,3^2]},\\
\mathbf{c}_{[3^2,2,1,3]},  & \mathbf{c}_{[3^2,2,1,3^2]}, &
\mathbf{c}_{[3^2,2,1,3,2]}, & \mathbf{c}_{[3^2,2,1,3,2,3]},\\
\mathbf{c}_{[3^2,2,1,3,2,3^2]}, & \mathbf{c}_{[3^2,2,1,3^2,2]},  &
\mathbf{c}_{[3,2,1,3^2,2,3]},  & \mathbf{c}_{[3,2,1,3^2,2,3^2]}
\end{array}$ & 1\\
   \hline
   $\mathcal P(3,1)$ &
$ \begin{array}{rl} \mathbf{c}_{[3,2,3]} \hskip - 0.2 cm & = \{{  s_3s_2s_3},{ s_1 s_3s_2s_3},{ s_2s_1 s_3s_2s_3}\} \\
 \mathbf{c}_{[3,2,3^2]} \hskip - 0.2 cm &=\{{  s_3s_2s_3^2},{ s_1 s_3s_2s_3^2},{ s_2s_1 s_3s_2s_3^2}\} \\
 \mathbf{c}_{[3^2,2,3]} \hskip - 0.2 cm &=\{{  s_3^2s_2s_3},{ s_1 s_3^2s_2s_3},{ s_2s_1 s_3^2s_2s_3}\} \\
 \mathbf{c}_{[3^2,2,3^2]} \hskip - 0.2 cm &=\{{  s_3^2s_2s_3^2},{ s_1 s_3^2s_2s_3^2},{ s_2s_1 s_3^2s_2s_3^2}\} \\
 \mathbf{c}_{[3,2,1]} \hskip - 0.2 cm &=\{{  s_3s_2s_1},{ s_2 s_3s_2s_1},{ s_1s_2 s_3s_2s_1}\} \\
 \mathbf{c}_{[3^2,2,1]}&=\{{  s_3^2s_2s_1},{ s_2 s_3^2s_2s_1},{ s_1s_2 s_3^2s_2s_1}\}
\end{array}$ & 3 \\
  \hline
  $\mathcal P(3,2)$  & 
 $ \begin{array}{rl} \mathbf{c}_{[3]}  \hskip - 0.2 cm & =\{{  s_3},{ s_1 s_3},{ s_2 s_3},s_1s_2{  s_3},s_2s_1{  s_3} \} \\
  \mathbf{c}_{[3^2]}  \hskip - 0.2 cm & =\{{  s_3^2},{ s_1 s_3^2},{ s_2 s_3^2},s_1s_2{  s_3^2},s_2s_1{  s_3^2} \}\\
    \mathbf{c}_{[3,2]}  \hskip - 0.2 cm & =\{{  s_3s_2},{ s_1 s_3s_2},{ s_2 s_3s_2},s_1s_2{  s_3s_2},s_2s_1{  s_3s_2} \}\\
    \mathbf{c}_{[3^2,2]}  \hskip - 0.2 cm & =\{{  s_3^2s_2},{ s_1 s_3^2s_2},{ s_2 s_3^2s_2},s_1s_2{  s_3^2s_2},s_2s_1{  s_3^2s_2} \}
  \end{array}$ & 5 \\
  \hline
  $\mathcal P(3,3)$ & 
 $ \mathbf{c}_{[]}=\{{ [~]},{ s_1},{ s_2},{ s_1s_2}, { s_2s_1} \}$ &5
 \\
   \hline
\end{tabular}
\end{center}

\medskip

(3) The set of reduced words for the group $G(3,3,3)$ is given in Appendix. To the canonical words, one applies the embedding $\iota: G(3,3,3) \hookrightarrow G(3,1,3)$ and sees that the group has 17 fully commutative elements. We list them all below, where we write $[i_1i_2 \dots i_k]$ for $\ts_{i_1}\ts_{i_2} \cdots \ts_{i_k} \in G(3,3,3)$ and $\la i_1i_2 \dots i_k \ra$ for $s_{i_1}s_{i_2} \cdots s_{i_k} \in G(3,1,3)$.

\begin{align*}
[ ]  & \longmapsto \la \ra, & [1] & \longmapsto \la 1 \ra,  & [2]  & \longmapsto \la 2 \ra,  \\
[3] & \longmapsto \la 3^223 \ra,&
[12]  & \longmapsto \la 12 \ra , &
[13]  & \longmapsto \la 13^223 \ra , \\
[21]  &\longmapsto \la 21 \ra , &
[31]  & \longmapsto \la 3^2231 \ra= \la 3^2213 \ra , &
[213] & \longmapsto \la 213^223 \ra, \\
[232]  &\longmapsto \la 23^2232 \ra = \la 323^2 \ra , &
[312]  &\longmapsto \la 3^22132 \ra, &
[1232] & \longmapsto \la 123^2232 \ra = \la 1323^2 \ra, \\
[2321] & \longmapsto  \la 3213^2 \ra, &
[12132] & \longmapsto \la 21 32 3^2 \ra, &
[13123]  &\longmapsto \la 3^2213^223^2 \ra,  \\
[23121]  &\longmapsto  \la 3213^22 \ra, &
[23213]  &\longmapsto  \la 321323 \ra.
\end{align*}

The $(3,0)$-packet has $6$ collections, each of which has only one element:
\[ \PP(3,0) = \left \{ \{\la 3^2213 \ra  \}, \{ \la 3^22132 \ra \}, \{\la 3213^2 \ra \},  \{\la 3^2213^223^2 \ra\} ,\{ \la 3213^22 \ra \},  \{ \la 321323 \ra \} \right \}.  \]
There are 2 collections in the $(3,1)$-packet, each of which has 3 elements:
\[ \PP(3,1) = \left \{ \{ \la 3^223 \ra, \la 13^223 \ra ,  \la 213^223 \ra\}, \{ \la 323^2 \ra , \la 1323^2 \ra,  \la 21 32 3^2 \ra \} \right \}.  \]
Recall that there is no $(3,2)$-packet.
There is only one collection in the $(3,3)$-packet and it has 5 elements:
\[ \PP(3,3) = \left \{ \{ \la \ra,  \la 1 \ra, \la 2 \ra, \la 12 \ra ,  \la 21 \ra \} \right \}. \]
All together we have
\[ 1 \times 6 + 3 \times 2 + 5 \times 1 =17. \]

\begin{remark}
Before taking the embedding $G(3,3,3) \hookrightarrow G(3,1,3)$, we may want to say that the element $\ts_2 \ts_3=[23]$ is fully commutative. After the embedding, we have \[ [23] \longmapsto \la 23^2 23 \ra  ,\] and the element is not fully commutative.
\end{remark}

\section{Appendix: reduced words for $G(3,3,3)$ without an embedding} \label{sec-app}

In this appendix, we write $[i_1i_2 \dots i_k]$ for $\ts_{i_1}\ts_{i_2} \cdots \ts_{i_k}$.

\begin{lemma} \label{lem-rere}

The following relations hold in $G(3,3,3)$:

\begin{equation*} \begin{array}{llll}
[31232]=[13123], & [32131]=[23213], & [213121]=[131213], & [213123]=[131231], \\

[213213]=[132132], &  [231213]=[123121], & [231231]=[123123], &   [232132]=[132131], \\

[312131]=[121312], & [312132]=[121321], &  [312312]=[123123],  & [321321]=[132132] .

\end{array}
\end{equation*}

\end{lemma}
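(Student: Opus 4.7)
The plan is to verify each of the twelve identities directly from the defining relations of $G(3,3,3)$, namely (a) the involutions $\ts_i^2=1$, (b) the two braid relations $\ts_2\ts_1\ts_2=\ts_1\ts_2\ts_1$ and $\ts_3\ts_1\ts_3=\ts_1\ts_3\ts_1$, (c) the order-three relation $(\ts_3\ts_2)^3=1$, from which together with (a) one immediately deduces the braid $\ts_3\ts_2\ts_3=\ts_2\ts_3\ts_2$, and (d) the long relation $\ts_3\ts_2\ts_1\ts_3\ts_2\ts_1=\ts_1\ts_3\ts_2\ts_1\ts_3\ts_2$ coming from \eqref{eq:l2}.

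The two five-letter identities each follow from a single substitution. For instance,
\[
\ts_3\ts_1\ts_2\ts_3\ts_2 \;=\; \ts_3\ts_1(\ts_2\ts_3\ts_2) \;=\; \ts_3\ts_1(\ts_3\ts_2\ts_3) \;=\; (\ts_3\ts_1\ts_3)\ts_2\ts_3 \;=\; \ts_1\ts_3\ts_1\ts_2\ts_3,
\]
which proves $[31232]=[13123]$, and $[32131]=[23213]$ is handled by the symmetric manipulation starting from $\ts_3\ts_2\ts_1\ts_3\ts_1=\ts_3\ts_2(\ts_1\ts_3\ts_1)$. The ten six-letter identities split into two sub-groups. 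First, $[321321]=[132132]$, $[231231]=[123123]$ and $[312312]=[123123]$ are essentially reformulations of (d): the first is literally (d), while the other two follow from (d) by inverting both sides using (a) or by a short rearrangement with (b) and (c). Second, each remaining six-letter identity is obtained by multiplying (d) on one side by a single generator, cancelling via $\ts_i^2=1$, and rearranging with one or two braid moves. A representative example: right-multiplying (d) by $\ts_2$ and simplifying via $\ts_2\ts_1\ts_2=\ts_1\ts_2\ts_1$ yields $\ts_1\ts_3\ts_2\ts_1\ts_3=\ts_3\ts_2\ts_1\ts_3\ts_1\ts_2\ts_1$; left-multiplying by $\ts_3$ and applying $\ts_3\ts_1\ts_3=\ts_1\ts_3\ts_1$ then gives $[213121]=[131213]$.

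The only real obstacle is bookkeeping: since $G(3,3,3)$ has no commutation relation among its generators, no rearrangement is "free", and each line requires a specific short chain of braid moves, which is easy to mis-transcribe. As a more mechanical alternative, one can apply the embedding $\iota\colon G(3,3,3)\hookrightarrow G(3,1,3)$ of Lemma~\ref{lem-tilde}, rewrite $\iota(\text{LHS})$ and $\iota(\text{RHS})$ as canonical words in $\mathcal W(3,1,3)$ using the rewriting rules in Proposition~\ref{ccan}, and observe that the two canonical words coincide; the injectivity of $\iota$ then yields the identity in $G(3,3,3)$.
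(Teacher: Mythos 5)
Your proposal is correct and follows essentially the same route as the paper: each identity is derived directly from the defining relations of $G(3,3,3)$ by applying the braid relations, the relation $\ts_3\ts_2\ts_3=\ts_2\ts_3\ts_2$, and the length-six relation $(\ts_3\ts_2\ts_1)^2=(\ts_1\ts_3\ts_2)^2$, multiplying by a generator and cancelling with $\ts_i^2=1$ as needed; indeed your first worked example $[31232]=[13123]$ is the very identity the paper computes. The paper's own proof is even terser (one worked example plus the assertion that the remaining relations are derived similarly), so your extra detail and the optional verification via the embedding $\iota$ and canonical forms only strengthen the argument.
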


\begin{proof}
All the relations are derived from the defining relations. For example, we have the defining relation $[313]=[131]$. Multiplying both sides by $\ts_2 \ts_3$ from the right, we obtain
\[ [31323]=[31232]=[13123] ,\] where we use another defining relation $[323]=[232]$. Thus we obtain $[31232]=[13123]$.
\end{proof}

\begin{prop} \label{prop-rw 333}
A set of reduced words for $G(3,3,3)$ is given by
\[ \begin{array}{lllllllll}

[\ ],      \\

[1], & [2], & [3],

\\

[12], &  [13], &  [21], &  [23], &  [31], &  [32]

\\

[121], &  [123], &  [131], &  [132], &  [213], &  [231], &  [232], &  [312], &  [321],

\\

[1213], &  [1231], &  [1232], & [1312], &  [1321], &  [2131],

\\

[2132], &  [2312], &  [2321], &  [3121],  &   [3123], &  [3213],

\\

[12131], &  [12132], & [12312], &  [12321], & [13121], &  [13123], &  [13213], &  [21312],

\\

[21321], & [23121], & [23123], &  [23213], &  [31213], &   [31231],  &    [32132],

\\

[121312], &  [121321],  &  [123121], &  [123123],  &  [123213],&  [131213], &  [131231],  &  [132132] .
\end{array}
\]

\end{prop}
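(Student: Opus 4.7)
The plan has three stages. First, a direct tally gives $1+3+6+9+12+15+8 = 54$ listed words of lengths $0,1,\dots,6$, matching $|G(3,3,3)| = 3!\cdot 3^{2} = 54$ (as computed via Lemma \ref{lem-tilde}). This cardinality match is what allows the remaining two steps to close the argument jointly.

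Second, I would show that every element of $G(3,3,3)$ admits an expression in the list, by a rewriting argument on words in the generators $\ts_1,\ts_2,\ts_3$. Proceeding by induction on word length, one takes an arbitrary word and applies, in order of preference: the involutive relations $\ts_i^2=1$ and the order relation $(\ts_3\ts_2)^3=1$ whenever they shorten the word; then the braid moves $[121]=[212]$, $[313]=[131]$, and $[321321]=[132132]$; and, when these do not suffice to produce a listed form, the derived relations of Lemma \ref{lem-rere}, which precisely handle the interleavings of $\ts_1$'s with $\ts_3$'s that no defining relation can rearrange directly. One organizes the case analysis by the positions and multiplicities of $\ts_3$ in the word, and verifies that every terminal position lies in the listed set.

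Third, for distinctness and reducedness I would pass to the embedding $\iota : G(3,3,3) \hookrightarrow G(3,1,3)$ from Section \ref{sec-g} and compute the image in $\mathcal{W}(3,1,3)$ of each of the 54 listed expressions (this is partially carried out in Example 6.3 for the $17$ fully commutative ones, and the remaining $37$ are handled identically). Lemma \ref{lem-tilde} implies that distinct canonical forms correspond to distinct group elements, so the $54$ images are pairwise distinct, and the count from the first step then forces a bijection between the list and $G(3,3,3)$. Reducedness of each listed word follows because any strictly shorter expression for the same element would, after rewriting via step two, collapse to a shorter member of the list, contradicting this bijection.

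The main obstacle is the rewriting in step two. Unlike in a Coxeter group, Matsumoto's theorem fails in $G(3,3,3)$: the defining braid-type relations alone are insufficient to connect arbitrary reduced expressions, in analogy with Example \ref{ex-over}, and this is precisely why Lemma \ref{lem-rere} is needed as auxiliary input. Carrying out the reduction is a finite but delicate enumeration; the bookkeeping is cleanest when indexed by the positions of the generator $\ts_3$ and by subwords matching the left-hand sides of $(\ts_3\ts_2)^3=1$ and of the twelve identities of Lemma \ref{lem-rere}, the latter being exactly the moves that cannot be produced by the defining relations.
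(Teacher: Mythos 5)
Your proposal is correct in outline and shares the paper's skeleton --- tally the $54$ listed words, match against $|G(3,3,3)|=3!\cdot 3^{2}=54$, and use the defining relations together with the derived relations of Lemma \ref{lem-rere} as the rewriting moves --- but it packages the two hard steps differently. The paper fixes the degree-lexicographic order with $1<2<3$ and invokes \GS basis theory: every word rewrites to one avoiding the leading terms of the relations, each rewriting step strictly decreases the deg-lex order (so termination and non-length-increase are automatic), and the count $54=|G|$ then forces the irreducible words to be normal forms, giving existence, uniqueness, and reducedness in one stroke. You instead do an explicit rewriting case analysis for surjectivity and then certify distinctness by pushing the $54$ words through $\iota$ into $\mathcal W(3,1,3)$. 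That is mathematically valid, though it cuts against the stated point of the appendix (reduced words \emph{without} an embedding), and it is also logically redundant: once every element is shown to admit a listed expression, $54$ words covering $54$ elements is already a bijection. The one place where the paper's framing does real work that your ``order of preference'' does not is termination: the braid moves and the twelve identities of Lemma \ref{lem-rere} are length-preserving, so without orienting them consistently (which is exactly what deg-lex does) your rewriting procedure could in principle cycle, and your reducedness argument also quietly depends on the rewriting being non-length-increasing. Both issues disappear if you adopt the deg-lex orientation; with that amendment your argument is a sound, slightly more hands-on variant of the paper's.
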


\begin{proof}
We set an ordering $1<2<3$ on the alphabet $I=\{1,2,3 \}$ and use the degree-lexicographic ordering on the set of words on $I$. Then one can see that the words in the list above do not contain as a subword any of the leading words of the defining relations for $G(3,3,3)$ and of the relations of Lemma \ref{lem-rere}. Further it can be checked that the list has all the words with this property.
The number of words in the list is $54$, which is exactly the order of $G(3,3,3)$. Thus it follows from the \GS basis theory that the list is a set of reduced words for $G(3,3,3)$.
\end{proof}

\vskip 1 cm

\end{document}